\theoremstyle{definition}
\newtheorem{theorem}{Theorem}[section]
\newtheorem{definition}[theorem]{Definition}
\newtheorem{lemma}[theorem]{Lemma}
\newtheorem{remark}[theorem]{Remark}
\newtheorem{corollary}[theorem]{Corollary}
\newtheorem*{remark*}{Remark}
\numberwithin{equation}{section}
\newcommand{\mc}{\mathcal}
\newcommand{\mb}{\mathbb}
\newcommand{\bn}{\mathbf{n}}
\newcommand{\Q}{\mathbb{Q}}
\newcommand{\HH}{\widetilde{H}}
\newcommand{\ZZ}{\widetilde{Z}}
\newcommand{\om}{\omega}
\newcommand{\pa}{\partial}
\newcommand{\mr}{\mathrm}
\newcommand{\R}{\mathbb{R}}
\DeclareMathOperator{\Hess}{Hess}
\title[Bifurcation and genericity]{Bifurcation of perturbations of non-generic closed self-shrinkers}
\date{\today}
\author{Zhengjiang Lin}
\address{Courant Institute of Mathematical Sciences, 251 Mercer Street, New York, NY10012, USA}
\email{malin@nyu.edu}
\author{Ao Sun}
\address{Department of Mathematics,
University of Chicago,
5734 S. University Avenue,
Chicago, IL 60637, USA}
\email{aosun@uchicago.edu}
\begin{document}
\maketitle

\begin{abstract}
    We discover a bifurcation of the perturbations of non-generic closed self-shrinkers. If the generic perturbation is outward, then the next mean curvature flow singularity is cylindrical and collapsing from outside; if the generic perturbation is inward, then the next mean curvature flow singularity is cylindrical and collapsing from inside.
\end{abstract}

\section{Introduction}

A mean curvature flow (MCF) is a family of hypersurfaces $\{M_t\}$ in $\R^{n+1}$ satisfying
\begin{equation}
    \partial_t x=-H \bn.
\end{equation}

Here $H$ is the mean curvature, which is the minus of trace of the second fundamental form, and $\bn$ is the unit outer normal vector. It is known that a mean curvature flow of closed hypersurfaces must generate finite-time singularities, and a central topic in mean curvature flow is to study the singular behavior.

The singularities of mean curvature flows are modeled by a special class of hypersurfaces called self-shrinkers. There are a lot of self-shrinkers (see \cite{A92}, \cite{N14}, \cite{KKM18}) and it seems impossible to classify all self-shrinkers. In contrast, it is believed that, generically, the singularities should not be too complicated (see \cite{H90}, \cite{AIC95}). 

The theory of generic singularities was first developed by Colding-Minicozzi in \cite{CM12}. In \cite{CM12}, Colding-Minicozzi established a way to characterize the genericity of self-shrinkers. Moreover, they showed that only those generalized cylinders $S^m(\sqrt{2m})\times\R^{n-m}$ are generic self-shrinkers. Besides, they proved that one can perturb a non-generic self-shrinker so that it will never be the tangent flow of the mean curvature flow starting at the perturbed hypersurface. 

In this paper, we further investigate the perturbations of non-generic self-shrinkers. Our main theorem is the discovery of the following bifurcation phenomenon:

\begin{theorem}\label{thm:main1}
Suppose $\Sigma^n$ is a non-generic closed embedded self-shrinker in $\R^{n+1}$, with $k$-th homology (in $\mb{Q}$) non-trivial for some $1\leq k\leq n-1$. Then after a generic perturbation (see Definition \ref{D:generic}), the perturbed hypersurface $\widetilde{\Sigma}^n$ under mean curvature flow will
\begin{enumerate}
\item develop a cylindrical singularity collapsing from inside if the perturbation is inward,
\item or develop a cylindrical singularity collapsing from outside if the perturbation is outward.
\end{enumerate}
\end{theorem}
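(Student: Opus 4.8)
The plan is to combine three ingredients: Colding--Minicozzi's entropy-drop mechanism for generic perturbations, Huisken's monotonicity together with the classification of low-entropy shrinkers, and a topological obstruction that forces the resulting cylinder to ``collapse'' in a prescribed direction. First I would set up the generic perturbation $\widetilde{\Sigma}$ of $\Sigma$ as in Definition~\ref{D:generic}: perturb in the direction that strictly decreases the Gaussian entropy $\lambda$, and split the perturbation into an ``inward'' piece (moving $\widetilde\Sigma$ into the region bounded by $\Sigma$) and an ``outward'' piece. The key monotonicity fact is that the entropy of the MCF starting from $\widetilde\Sigma$ is strictly less than $\lambda(\Sigma)$ for all positive times, so every tangent flow at the first singularity is a shrinker of entropy strictly below $\lambda(\Sigma)$; moreover, by Colding--Minicozzi, such a tangent flow cannot be $\Sigma$ itself nor any shrinker with an unstable ($F$-index $\geq 2$) structure, so it must be a generalized cylinder $S^m(\sqrt{2m})\times\R^{n-m}$ or a hyperplane. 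Ruling out the hyperplane (which would mean the flow is smooth past that time by Brakke/White regularity, contradicting that a singularity forms, or by the low-entropy gap) leaves a genuine cylindrical singularity.

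Next I would pin down the ``direction of collapse.'' The cylinder $S^m(\sqrt{2m})\times\R^{n-m}$ has a well-defined inside (the solid region $B^{m+1}\times\R^{n-m}$) and outside; ``collapsing from inside'' means that near the singular point the flow $\widetilde\Sigma_t$ lies in a one-sided neighborhood of the cylinder, on the inner side, and symmetrically for ``from outside.'' The mechanism linking this to the sign of the perturbation is an avoidance/barrier argument: if the perturbation is inward, then for a short time $\widetilde\Sigma_t$ is enclosed by $\Sigma_t$ (the self-shrinking flow of $\Sigma$), which contracts to the origin at time $1/2$; by the avoidance principle this enclosure persists, so $\widetilde\Sigma_t$ stays inside the shrinking $\Sigma_t$, and rescaling about the singular point of $\widetilde\Sigma$ shows the blow-up limit cylinder is approached strictly from the inside. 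The outward case is the mirror image, using that $\widetilde\Sigma_t$ encloses $\Sigma_t$. I would make this rigorous by working with the rescaled MCF (Huisken's rescaling) centered at the first singular point and invoking the strong maximum principle / Sturm-type oscillation decay to upgrade one-sidedness at the linearized level to genuine one-sidedness of the flow.

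The role of the nontrivial $k$-th $\mathbb{Q}$-homology, $1\le k\le n-1$, is to guarantee that a \emph{genuine} cylindrical singularity occurs rather than the flow shrinking to a round point (an $S^n$-type singularity modeled on $S^n(\sqrt{2n})$, which has trivial middle homology) or disappearing smoothly. Concretely, if the only singularity were a round point, the flow would be diffeomorphic to a shrinking sphere for $t$ slightly less than the singular time, forcing $\Sigma$ (which is isotopic to $\widetilde\Sigma$, hence to these time slices) to have the homology of $S^n$, contradicting $H_k(\Sigma;\mathbb{Q})\neq 0$ for some $1\le k\le n-1$. One also needs to rule out that the first singularity is a \emph{lower-dimensional} cylinder in a way incompatible with the topology, but any $S^m(\sqrt{2m})\times\R^{n-m}$ with $1\le m\le n-1$ is an acceptable ``cylindrical singularity,'' so this is not an obstruction; the homology hypothesis is exactly what excludes $m=n$.

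The main obstacle I anticipate is the second step: rigorously transferring the sign of the perturbation through the blow-up limit to a one-sided statement about the tangent flow. The avoidance principle controls $\widetilde\Sigma_t$ relative to $\Sigma_t$ up to the vanishing time $1/2$ of $\Sigma_t$, but the first singularity of $\widetilde\Sigma_t$ may occur \emph{before} time $1/2$, and near that singular point $\Sigma_t$ is still a large smooth shrinker, so its barrier property is weak at the relevant scale. To handle this I would instead track the \emph{rescaled} flows: the rescaled MCF $\Sigma^s$ of $\Sigma$ is the stationary shrinker, while the rescaled MCF $\widetilde\Sigma^s$ of $\widetilde\Sigma$ converges (along the singular point) to the cylinder; the inward/outward enclosure is preserved under the rescaling, giving one-sidedness of the cylinder relative to the \emph{shrinker} $\Sigma$, which (since $\Sigma$ itself is on the correct side of its own shrinking) yields one-sidedness relative to the cylinder. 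Making the limiting side-condition survive (it could a priori degenerate to the cylinder being two-sidedly tangent) requires a unique-continuation or Sturmian argument in the rescaled flow, and that is the technical heart of the proof.
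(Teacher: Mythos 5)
Your proposal has two genuine gaps, and both concern the steps that carry the actual content of the theorem. First, the cylindricity step: a strict entropy drop below $\lambda(\Sigma)$ only prevents $\Sigma$ itself from reappearing as a tangent flow; it does \emph{not} imply that the tangent flow is a generalized cylinder or a plane, since there could be other non-generic shrinkers of smaller entropy (classifying low-entropy shrinkers in general dimension is open, and the paper explicitly avoids entropy arguments). The mechanism you are missing is the sign of the \emph{rescaled} mean curvature $\widetilde H = H - \tfrac{1}{2}\langle x,\bn\rangle$: by Lemma 1.2 of \cite{CIMW13} a generic perturbation makes $\widetilde\Sigma$ rescaled mean convex (inward) or rescaled mean concave (outward), this sign is preserved along the rescaled MCF by the maximum principle (Lemma \ref{L:Preservation of rescaled mean curvature}), and then the Hershkovits--White regularity theory (Theorem \ref{T:Smooth convergence}), or equivalently the non-collapsing argument of Section \ref{S:Non-collapsing}, gives that every tangent flow is a smooth multiplicity-one generalized cylinder. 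Your proposal never introduces $\widetilde H$, so it has no access to this regularity theory. Relatedly, the homology hypothesis is not there to exclude a round-point model: it is what forces the \emph{rescaled} flow to become singular in finite time at all in the outward case (Theorem \ref{T:finite time singularity}, proved by a linking/Mayer--Vietoris argument combined with avoidance; compare the outward-perturbed sphere, whose RMCF never becomes singular). Without that, there is no rescaled singularity to blow up.

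Second, the direction of collapse. Your avoidance argument only controls $\widetilde\Sigma_t$ relative to the shrinking copies of $\Sigma$, and, as you partly anticipate, this cannot localize: near a neck both sheets of $\widetilde\Sigma_t$ lie on the same side of $\Sigma_t$, so enclosure by $\Sigma_t$ says nothing about which side of the limiting cylinder the blow-ups approach; no Sturmian or unique-continuation upgrade fixes this, because the statement being upgraded is not the right one. The paper's argument is a direct pointwise one: rescaled mean convexity gives $\langle\partial_t x,\bn\rangle<0$, the parabolic rescalings $M^{\sigma,(y,T)}_t$ satisfy $H \geq \tfrac{1}{2\sigma^2}\langle x^{\sigma,(y,T)},\bn\rangle + \tfrac{1}{2\sigma}\langle y,\bn\rangle$, and letting $\sigma\to\infty$ along the smoothly converging blow-up sequence yields $H\geq 0$ on the limit with respect to the \emph{limit of the normals} of the blow-ups; Huisken's classification of mean-convex shrinkers then identifies the cylinder together with its orientation, which is exactly the definition of collapsing from inside. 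The outward case gives $H\leq 0$ and the opposite orientation. This sign-propagation argument is the heart of the bifurcation and is absent from your proposal.
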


Let us be more precise. A closed hypersurface $\Sigma^n$ in $\R^{n+1}$ would divide $\R^{n+1}$ into two connected components, and we call the unbounded one the {\it outside} of $\Sigma$ and we call the bounded one the {\it inside} of $\Sigma$. Suppose $f$ is a positive function on $\Sigma$, then $s f\bn$ is an {\it outward perturbation} on $\Sigma$ if $s>0$ and an {\it inward perturbation} if $s<0$. A singularity is a {\it cylindrical} singularity if the tangent flow at this singularity is a multiplicity $1$ generalized cylinder $\{\sqrt{-t}(S^m\times\R^{n-m})\}_{t\in(-\infty,0)}$. By Brakke/White's regularity theory (see \cite{W05}), it means that the parabolic blow up sequence at this singularity converges to $\{\sqrt{-t}(S^m\times\R^{n-m})\}_{t\in(-\infty,0)}$ smoothly. Then we say this cylindrical singularity {\it collapsing from inside} if the unit normal vectors of the blow-up sequence converge to the unit outer normal vector on $\{\sqrt{-t}(S^m\times\R^{n-m})\}_{t\in(-\infty,0)}$, and we say this cylindrical singularity {\it collapsing from outside} if the unit normal vectors of the blow-up sequence converge to the opposite of the unit outer normal vector on $\{\sqrt{-t}(S^m\times\R^{n-m})\}_{t\in(-\infty,0)}$. The following figure gives an intuitive illustration of our main theorem.

\begin{figure}[h!]
\includegraphics[width=0.8\linewidth]{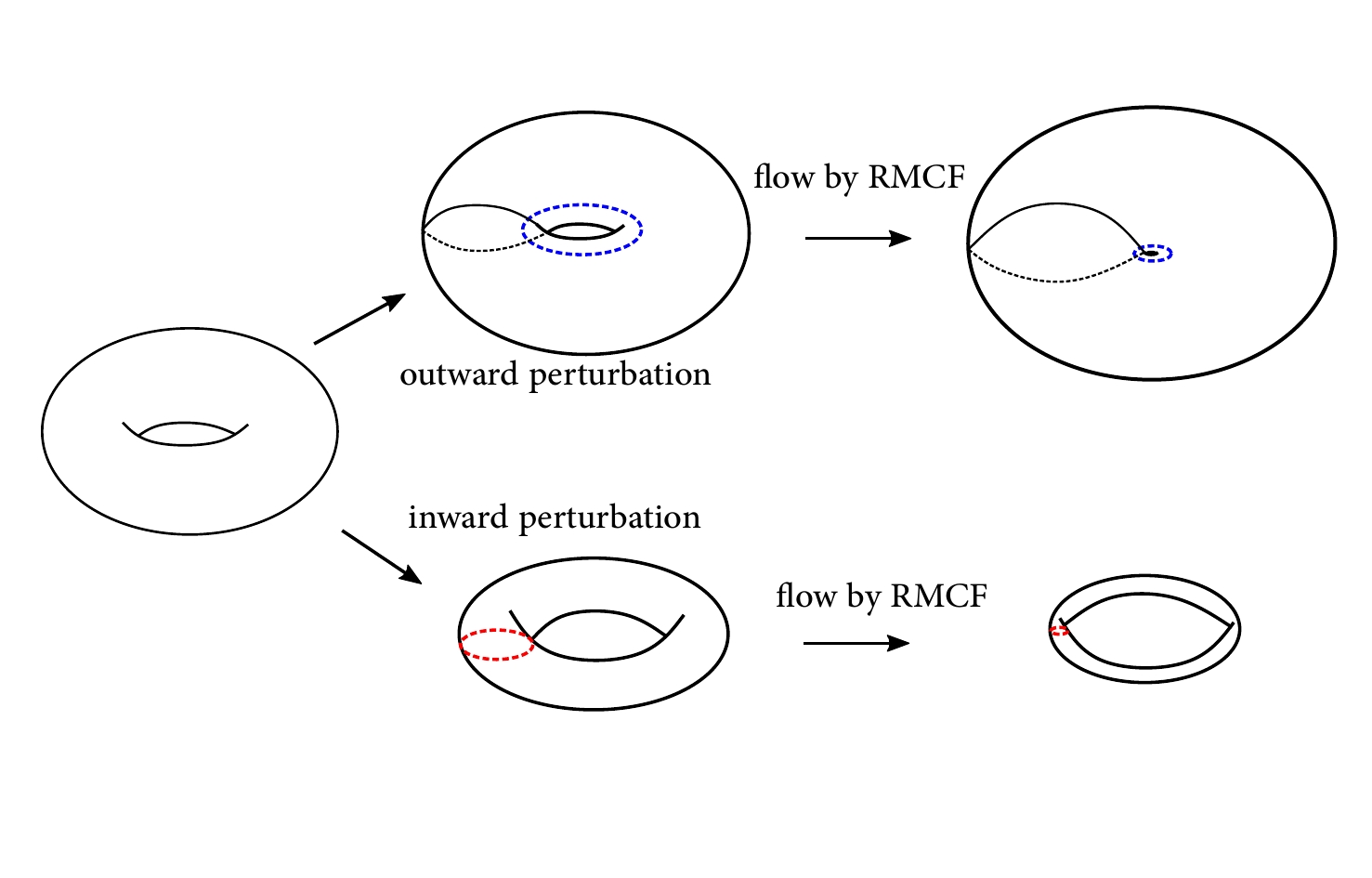}
\caption{Bifurcation of generic perturbations. After an outward perturbation, under RMCF, the blue dotted circle collapses, which represents a collapsing from outside. After an inward perturbation, the red dotted circle collapses, which represents a collapsing from inside.}
\end{figure}

\bigskip

The proof of Theorem \ref{thm:main1} relies on several observations. The first one is a regularity theory developed by Hershkovitz-White \cite{HW19} for rescaled mean curvature flow (In \cite{HW19} they called it $X$-weak flow). A family of hypersurfaces is a rescaled mean curvature flow if they satisfy the equation
\begin{equation}
\partial_t x=-\left(H-\frac{\langle x,\bn\rangle}{2}\right)\bn.
\end{equation}
The rescaled mean curvature flow (RMCF) was introduced by Huisken in \cite{H90}, and each RMCF is equivalent to a mean curvature flow up to a rescaling in spacetime. The quantity $(H-\frac{\langle x,\bn\rangle}{2})$ is called the {\it rescaled mean curvature}. We say a hypersurface is {\it rescaled mean convex } or {\it rescaled mean concave} if $(H-\frac{\langle x,\bn\rangle}{2})>0$ or $(H-\frac{\langle x,\bn\rangle}{2})<0$ respectively (see \cite[Section 2]{CIMW13}).

Suppose $\Sigma$ is a non-generic closed self-shrinker. After a generic outward perturbation, the perturbed hypersurface $\widetilde\Sigma$ is rescaled mean convex or rescaled mean concave (see \cite[Lemma 1.2]{CIMW13}). The parabolic maximum principle shows that a RMCF starting at $\widetilde\Sigma$ must be rescaled mean convex or rescaled mean concave respectively in the future. Moreover, this RMCF is {\it nested}, in the sense that for $0\leq t<s$, $\widetilde M_s$ always lies inside/outside of $\widetilde M_t$ if the RMCF is rescaled mean convex/rescaled mean concave respectively.

Hershkovitz-White have developed the regularity of the limit flow of a rescaled mean convex RMCF in \cite{HW19} (See also references [7], [8], [9] in \cite{HW19}). Their result also holds true for rescaled mean concave RMCF by flipping the inside and outside of the hypersurfaces. We state their regularity theorem here.

\begin{theorem}[Theorem 4 in \cite{HW19}]\label{T:Smooth convergence}
The singularities of rescaled mean convex/rescaled mean concave are of convex type. That is, the limit flow is smooth and has multiplicity $1$. Moreover, the tangent flows are generalized cylinders.
\end{theorem}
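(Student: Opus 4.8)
Since Theorem~\ref{T:Smooth convergence} is quoted from \cite{HW19}, the task is to indicate how one would prove it; the natural route is to transplant the mean-convex regularity theory for ordinary mean curvature flow (Brian White, Huisken--Sinclair, Sheng--Wang, Haslhofer--Kleiner) to the rescaled setting, viewing a RMCF as an $X$-flow $\partial_t x=-(H+\langle X,\bn\rangle)\bn$ with the \emph{linear} drift field $X(x)=-x/2$, so that the rescaled mean curvature $u:=H-\tfrac12\langle x,\bn\rangle$ plays the role of the mean curvature. First I would record that along a RMCF the function $u$ solves a drift heat equation $\partial_t u=\mathcal L u+(|A|^2+\tfrac12)u$ with $\mathcal L=\Delta-\tfrac12\langle x,\nabla\,\cdot\,\rangle$; since there is no adverse zeroth-order term, the strong maximum principle preserves $\{u>0\}$ and $\{u<0\}$, so a rescaled mean convex RMCF stays rescaled mean convex and, as noted in the excerpt, is nested. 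Nestedness identifies the limit flow with the associated weak set flow and rules out fattening, so ``the limit flow'' is well defined as a unit-regular cyclic Brakke flow, and it suffices to analyze its singularities.

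The heart of the matter is to establish, for rescaled mean convex RMCF, the two scale-invariant estimates that drive the mean-convex theory. The first is a non-collapsing estimate of Sheng--Wang/Andrews/White type: there is $\alpha>0$ such that wherever $u$ is large the flow admits interior and exterior tangent balls of radius $\alpha/u$, obtained by running Andrews' maximum-principle argument for the two-point quantity that measures the largest inscribed (resp.\ exscribed) sphere, the extra terms produced by the drift $X=-x/2$ being lower order and bounded on the compact spatial region where curvature concentrates. The second is a convexity (cylindrical) estimate of Huisken--Sinclair/White type: for every $\eps>0$ there is $C_\eps$ with $\lambda_1\ge-\eps u-C_\eps$ for the smallest principal curvature $\lambda_1$, proved either by a Stampacchia iteration on $\lambda_1/u$ or by a blow-up--contradiction argument fed by the non-collapsing estimate. \emph{This is the step I expect to be the main obstacle}: one must check that Hamilton's tensor maximum principle (or the iteration) still closes up once the drift corrections are carried along --- this is precisely the technical content of \cite{HW19}, and the reason RMCF rather than a general $X$-flow is convenient is that $X(x)=-x/2$ is linear, so every correction term is explicit and controllable.

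Granting the two estimates, I would finish by a blow-up analysis. Rescaling the RMCF parabolically about a singular spacetime point, the linear drift $-x/2$ scales away, so every limit flow is an ordinary ancient mean curvature flow; by Brakke's local regularity theorem together with the uniform local curvature bound furnished by the convexity estimate and the absence of sheeting furnished by non-collapsing, this limit flow is smooth, has multiplicity one, and is a weakly convex, non-collapsed ancient solution --- i.e.\ the singularities are of convex type. A tangent flow is in addition self-similar by Huisken's monotonicity, hence a smooth, weakly convex, non-collapsed self-shrinker of polynomial volume growth, and the classification of mean-convex self-shrinkers (Huisken; Colding--Minicozzi \cite{CM12}) forces it to be a generalized cylinder $S^m(\sqrt{2m})\times\R^{n-m}$, the flat hyperplane $m=0$ being excluded since the Gaussian density at a genuine singular point exceeds $1$. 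This yields all three assertions of the theorem; the rescaled mean concave case follows verbatim after interchanging the two complementary components of $\R^{n+1}$.
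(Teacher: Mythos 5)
Your proposal is correct and follows essentially the same route as the paper: the theorem is quoted from Hershkovits--White, and the paper's own supporting argument is precisely the Andrews-type two-point non-collapsing estimate for RMCF (Theorem \ref{thm:main2}, proved in the appendix) combined with Haslhofer--Kleiner's half-space convergence and curvature estimates, with the drift term scaling away under parabolic rescaling exactly as you describe. The only cosmetic differences are that you frame the convexity estimate as a separate Huisken--Sinestrari-style step, whereas the paper (following \cite{HK17}) extracts the needed curvature control directly from non-collapsing via the half-space convergence theorem and one-sided minimization for Gaussian area; also, the name is Huisken--Sinestrari, not ``Huisken--Sinclair''.
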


An alternative approach to understand the regularity of the limit flow is to compare it to the regularity theory of mean convex MCF. After the pioneer work by White on mean convex MCF (see \cite{W94}, \cite{W00}, \cite{W03}), there are some interpretations of the results from different points of view. One of them is the non-collapsing analysis initiated by Sheng-Wang in \cite{SW09} and Andrews in \cite{A12}. Haslhofer-Kleiner \cite{HK17} used the non-collapsing condition to give some simple arguments towards the regularity of mean convex MCF (see also \cite{HK17-2}). We prove that a similar non-collapsing argument also holds true for RMCF.
\begin{theorem}\label{thm:main2}
    Let $M^n$ be a closed manifold, and $F: M^n \times [0,T) \to \mb{R}^{n+1}$ a family of smooth embeddings evolving by the rescaled mean
    curvature flow, with positive rescaled mean curvature (negative rescaled mean curvature resp.). If $\widetilde M_0 = F(M, 0)$ is rescaled-$\delta$-non-collapsed (rescaled-$-\delta$-non-collapsed resp.) for some $\delta > 0 $, then $\widetilde M_t = F(M, t)$ is 
    also rescaled-$\delta$-non-collapsed (rescaled-$-\delta$-non-collapsed resp.) for every $t \in [0,T) $. 
\end{theorem}

We would also like to address two alternative approaches related to this problem. The first one was studied by Huisken-Sinestrari in \cite{HS99}, \cite{HS99-2} (also see some applications by Brendle-Huisken in \cite{BH16}, \cite{BH18}). They used differential geometry techniques to study mean convex MCF. Though we do not use this approach in our paper, it is plausible that their techniques can also be used to study RMCF.

The second one was studied by Colding-Ilmanen-Minicozzi-White in \cite{CIMW13}. They studied the regularity of the tangent flows of RMCFs with low entropy. Their approach was further generalized by Bernstein-Wang \cite{BW16} and Zhu \cite{Z16}.

\medskip

Our further observation is a finite time singularity argument for RMCF of the generic perturbed hypersurfaces. Note that a RMCF defined for time $t\in[0,\infty)$ is naturally corresponding to a MCF defined for time $t\in[-1,0)$ (see Section \ref{S:Preliminaries}). Suppose that $\Sigma$ is a non-generic closed self-shrinker, with non-trivial $k$-th homology (in $\mb{Q}$). After a generic perturbation, the perturbed hypersurface $\widetilde\Sigma$ must develop a finite time singularity under RMCF. This is equivalent to say that the corresponding MCF develops a singularity at time $t<0$. Since a self-shrinking MCF develops a singularity at time $t=0$, this means that after the generic perturbation, the MCF starting at $\widetilde{\Sigma}$ would develop a singularity earlier than the time when the self-shrinking MCF generates a singularity.

If the perturbation is inward, the finite-time singularity of RMCF has been proved by Colding-Ilmanen-Minicozzi-White in \cite{CIMW13}, and there is no topological assumption on the self-shrinkers. Nevertheless, for an outward perturbation, the topological assumption that the $k$-th homology is non-trivial is necessary. To see this, we can think about a self-shrinking sphere $S^n(\sqrt{-2nt})$. If we perturb the $-1$ time slide $S^n(\sqrt{2n})$ outwards a little bit to $S^{n}(\sqrt{2n+\epsilon})$, it would not shrink to a point before time $0$. Hence, the RMCF starting at $S^{n}(\sqrt{2n+\epsilon})$ does not have a finite time singularity.

The condition that the $k$-th homology is non-trivial is also used by White-Heshkovitz in \cite{HW19}. It is natural to imagine that this condition is very special for self-shrinkers. We remark that Brendle proved in \cite{B16} that in $\mathbb{R}^3$, non-generic closed embedded self-shrinkers must have a non-trivial $1$-st homology group. 

\medskip

Our final observation is the bifurcation in perturbations. This bifurcation has been discovered for $1$-dimensional MCF. In \cite{AL86}, Abresch-Langer conjectured that given a closed immersed self-shrinker in the plane (which is known as an {\it Abresch-Langer curve}), after an outward perturbation, it would become round under MCF, and after an inward perturbation, it would generate some cusp singularities. This conjecture was proved by Au in \cite{A10}. 

The local dynamic of an Abresch-Langer curve has been studied by Epstein-Weinstein in \cite{EW87}, and Au's result can be viewed as a partial extension to long-time dynamic. Similarly, a higher dimensional local dynamic result has been studied by Colding-Minicozzi in \cite{CM18}, \cite{CM18-2}. Our result can also be viewed as a partial extension of the long-time dynamic. That is, we can give information about the next time singularity after a generic perturbation.

\medskip

We organize this paper as follows. In Section \ref{S:Preliminaries}, we present some preliminaries on rescaled mean curvature flow, self-shrinkers, and generic perturbations. In Section \ref{S:Finite time blow up of RMCF}, we prove a finite time blow-up theorem of RMCF. In Section \ref{S:Non-collapsing}, we study the non-collapsing result of RMCF. In Section \ref{S:Bifurcation}, we study the bifurcation of the next time singularity after the perturbation. We also have an appendix including some computations and the non-collapsing result for RMCF in case the readers may find it interesting.

\bigskip

{\bf Acknowledgement}
A.S. is grateful to his advisor Bill Minicozzi for support and encouragement. A.S. wants to thank Zhichao Wang, Jinxin Xue, Jonathan Zhu for discussions and comments. Z.L. would like to thank Professor Fang-Hua Lin for encouragement. Both authors are grateful to the anonymous referee for helpful comments.

\section{Preliminaries}\label{S:Preliminaries}
A self-shrinker in $\R^{n+1}$ is a hypersurface $\Sigma$ satisfying the equation
\begin{equation}
H-\frac{\langle x,\bn\rangle}{2}=0.
\end{equation}
The name comes from the fact that $\{\sqrt{-t}\Sigma\}_{t\in(-\infty,0)}$ is a mean curvature flow. Recall that the tangent flow of a MCF at spacetime point $(y,T)$ is the limit of the parabolic rescaling sequence $\{\alpha_i(M_{T+\alpha_i^{-2}t}-y)\}$ as $\alpha_i\to\infty$. It was proved by Huisken \cite{H90} (cf. \cite{W94}, \cite{I95}) that a tangent flow is a self-shrinking MCF $\{\sqrt{-t}\Sigma\}_{t\in(-\infty,0)}$ where $\Sigma$ is a self-shrinker.

In this paper, we only study closed embedded self-shrinkers. Then, by Alexander duality, the self-shrinker would divide $\R^{n+1}$ into two connected components. Recall from the introduction that we call the bounded one {\it inside} and the unbounded one {\it outside}. We will always fix the unit normal vector $\bn$ at each point on $\Sigma$ to be the one pointing outside. 

A self-shrinker is {\it generic} if and only if it is $S^k(\sqrt{2k})\times \R^{n-k}$ for $k\in\{0,1,\cdots,n\}$. This definition is related to some deep theory in \cite{CM12}. 

We refer the readers to \cite[Section 2]{CM12} for further discussions on self-shrinkers. Here we discuss an important differential operator on self-shrinkers and generic self-shrinkers. Given a self-shrinker $\Sigma$, the {\it linearized operator} $L$ is defined by
\begin{equation}
    Lu=\Delta_\Sigma u -\frac{1}{2}\langle x,\nabla^\Sigma u\rangle +\left(1/2+|A|^2\right)u.
\end{equation}
The self-shrinkers are critical points of the Gaussian area functional
\[F(\Sigma)=\int_\Sigma e^{-|x|^2/4}d\mu_\Sigma,\]
and $L$ is the second variational operator for this functional. It is a self-adjoint operator with respect to the Gaussian density $e^{-|x|^2/4}d\mu_\Sigma$.

\medskip

Inspired by the shrinker's equation, we define the {\it rescaled mean curvature} of a hypersurface in $\R^{n+1}$ to be the quantity
\begin{equation}
    \widetilde H= H-\frac{\langle x,\bn\rangle}{2}.
\end{equation}
Recall from the introduction that we say a hypersurface is {\it rescaled mean convex} if $\widetilde H>0$ and we say a hypersurface is {\it rescaled mean concave} if $\widetilde H<0$.

\medskip

Next, we study the perturbations of a self-shrinker. We will use the following notation: if $\Sigma$ is a closed hypersurface in $\R^{n+1}$ and $f$ is a function on $\Sigma$, then we define the graph of $f$ over $\Sigma$ to be the hypersurface
\[\Sigma^f:=\{x+f(x)\bn(x):x\in\Sigma\}.\]

In Lemma 1.2 of \cite{CIMW13}, Colding-Ilmanen-Minicozzi-White perturbed a closed self-shrinker by the first eigenfunction of the linearized operator $L$. Then they proved that after the perturbation, the perturbed hypersurface is rescaled mean convex or rescaled mean concave, depending on the direction of the perturbation. We state the lemma here.

\begin{lemma}[Lemma 1.2 of \cite{CIMW13}]
  Let $\Sigma$ be a non-generic self-shrinker. There exists a positive function $f$ and $\epsilon>0$ such that $\Sigma^{sf}$ is 
 \begin{itemize}
 \item rescaled mean convex if $-\epsilon<s<0$,
 \item rescaled mean concave if $0<s<\epsilon$.
 \end{itemize}
\end{lemma}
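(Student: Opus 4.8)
The plan is to reduce the statement to the first variation of the rescaled mean curvature $\widetilde{H}$ along a normal perturbation. Along the family $\Sigma^{sf}=\{x+sf(x)\bn(x):x\in\Sigma\}$ each point moves with the purely normal velocity $f\bn$, so the standard normal--variation formulas ($\partial_s H=-\Delta_\Sigma f-|A|^2 f$ and $\partial_s\bn=-\nabla^\Sigma f$, whence $\partial_s\langle x,\bn\rangle = f-\langle x,\nabla^\Sigma f\rangle$) give, at $s=0$,
\[
\frac{d}{ds}\Big|_{s=0}\widetilde{H}(\Sigma^{sf}) \;=\; -\Delta_\Sigma f-|A|^2 f-\tfrac12 f+\tfrac12\langle x,\nabla^\Sigma f\rangle \;=\; -Lf,
\]
where $L$ is precisely the linearized operator from the preamble; note that it is exactly the extra term $-\langle x,\bn\rangle/2$ in $\widetilde{H}$ that promotes the operator $\Delta_\Sigma+|A|^2$ to the drift operator $L$. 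Since $\widetilde{H}\equiv 0$ on the self-shrinker $\Sigma^{0}=\Sigma$, it therefore suffices to exhibit a positive function $f$ with $Lf>0$ everywhere on $\Sigma$: then $\widetilde{H}(\Sigma^{sf})<0$ for small $s>0$ and $\widetilde{H}(\Sigma^{sf})>0$ for small $s<0$, which is rescaled mean concavity, resp.\ rescaled mean convexity.

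For such an $f$ I will take the ground state $\phi_1>0$ of the linearized operator, normalized by $L\phi_1=-\mu_1\phi_1$ with $\mu_1=\inf\mathrm{spec}(-L)$; that $\phi_1$ may be chosen strictly positive on the closed manifold $\Sigma$ is the usual simplicity of the bottom of the spectrum of the Schr\"odinger-type operator $-L$ (cf.\ \cite{CM12}). It remains to check $\mu_1<0$, for which one tests the Rayleigh quotient of $-L$ (with respect to the Gaussian weight) against the constant function $1$:
\[
\mu_1 \;\le\; -\,\frac{\int_\Sigma\big(|A|^2+\tfrac12\big)e^{-|x|^2/4}}{\int_\Sigma e^{-|x|^2/4}} \;<\;0 ,
\]
using $|A|^2+\tfrac12>0$. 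Hence $f:=\phi_1>0$ satisfies $Lf=-\mu_1\phi_1>0$ pointwise on $\Sigma$. (The argument uses only that $\Sigma$ is closed; non-genericity is the setting of interest, but is not strictly needed for this lemma.)

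It then remains to pass from the infinitesimal statement to an open interval of parameters $s$, which is a compactness matter. For $|s|$ small the graph map $x\mapsto x+sf(x)\bn(x)$ is an embedding and $(s,x)\mapsto \widetilde{H}(\Sigma^{sf})(x)$ (the rescaled mean curvature regarded as a function on $\Sigma$ via this map) is smooth, with $\partial_s\widetilde{H}(\Sigma^{sf})(x)\big|_{s=0}=-Lf(x)=\mu_1\phi_1(x)\le -c_0$, where $c_0:=|\mu_1|\min_\Sigma\phi_1>0$. By joint continuity of $\partial_s\widetilde{H}$ and compactness of $\Sigma$ there is $\epsilon>0$ with $\partial_s\widetilde{H}(\Sigma^{sf})(x)\le -c_0/2<0$ for all $|s|<\epsilon$ and all $x\in\Sigma$; integrating in $s$ from $\widetilde{H}(\Sigma^{0})\equiv 0$ gives $\widetilde{H}(\Sigma^{sf})<0$ for $0<s<\epsilon$ and $\widetilde{H}(\Sigma^{sf})>0$ for $-\epsilon<s<0$, as claimed.

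The whole argument is essentially routine; the only points demanding attention are the sign and normalization bookkeeping in the identity $\partial_s\widetilde{H}=-Lf$ (in particular that the drift first-order term is produced precisely by varying $-\langle x,\bn\rangle/2$), and arranging the estimate on $\partial_s\widetilde{H}$ to be uniform over the compact $\Sigma$ so that a single $\epsilon$ works. I do not anticipate a genuine obstacle.
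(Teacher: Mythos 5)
Your proof is correct and takes essentially the same route as the source the paper cites for this lemma (Lemma 1.2 of \cite{CIMW13}), namely perturbing by the positive first eigenfunction of $L$, using the linearization $\partial_s \widetilde{H}\big|_{s=0}=-Lf$ together with $\mu_1<0$, and passing to a uniform $\epsilon$ by compactness of $\Sigma$. The paper itself offers no proof beyond this citation, and your sign bookkeeping and the parenthetical remark that non-genericity is not actually needed for this particular statement are both accurate.
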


In \cite{CM12}, Colding-Minicozzi proved the existence of such a perturbation. This perturbation could lower a quantity called {\it entropy}, which is a core argument in \cite{CM12}. In this paper, we do not need any arguments directly related to entropy (though actually there are some relations, see \cite{CM12}), and we define the generic perturbations as follows.

\begin{definition}\label{D:generic}
 We say a perturbation $f\bn$ is a {\it generic perturbation} if 
 \begin{itemize}
 \item either $\Sigma^{f}$ is rescaled mean convex, and in this case we say $f\bn$ is an {\it inward perturbation};
 \item or $\Sigma^{f}$ is rescaled mean concave, and in this case we say $f\bn$ is an {\it outward perturbation}.
 \end{itemize}
\end{definition}

\medskip
Next, we study the {\it rescaled mean curvature flow} (RMCF). Recall from the introduction that a rescaled mean curvature flow is a family of hypersurfaces satisfying the equation
\begin{equation}\label{D:Curve under RMCF}
 \partial_t x= -\left(H-\frac{\langle x,\bn\rangle}{2}\right)\bn.
\end{equation}

A RMCF is equivalent to a MCF up to a rescaling in spacetime. More precisely, suppose $\widetilde M_t$ is a RMCF defined for $t\in[0,\infty)$, then $M_\tau=\sqrt{-\tau}\widetilde M_{-\log(-\tau)}$ is a MCF defined for $\tau\in[-1,0)$. Conversely, if $M_\tau$ is a MCF defined for $\tau\in[-1,0)$, then $\widetilde M_t=M_{-e^{-t}}/\sqrt{e^{-t}}$ is a RMCF defined for $t\in[0,\infty)$.

For MCF, Huisken \cite{H84} has done some important computations, which show that many geometric quantities satisfy some parabolic evolution equations. Later in \cite[Lemma 3.1]{CIMW13}, Colding-Ilmanen-Minicozzi-White did similar computations for RMCF. Here we list some evolution equations of geometric quantities. They play important roles in our later study of the non-collapsing results. See Lemma \ref{L:Evolution of the intrinsic geometry}, Lemma \ref{L:Evolution of the extrinsic geometry} and Lemma \ref{L:Evolution of T}. Here we only point out a simple but important fact based on these computations.

\begin{lemma}\label{L:Preservation of rescaled mean curvature}
    For a RMCF we have the following equation
    \begin{equation}
        \partial_t \left(H-\frac{\langle x,\bn\rangle}{2}\right)=L \left(H-\frac{\langle x,\bn\rangle}{2}\right).
    \end{equation}
    Moreover, by parabolic maximum principle, a RMCF is rescaled mean convex/rescaled mean concave if the initial hypersurface is rescaled mean convex/rescaled mean concave respectively.
\end{lemma}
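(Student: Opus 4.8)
The plan is to derive the evolution identity by a short direct computation from the first‑order evolution of $H$ and of $\langle x,\bn\rangle$ along the flow, and then to read off the preservation of the sign of $\widetilde H$ from the parabolic maximum principle applied to the resulting linear equation.

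First I would record two facts valid for an arbitrary normal deformation $\partial_t x=\varphi\bn$. Differentiating $\langle\bn,\bn\rangle\equiv1$ and $\langle\bn,\partial_i x\rangle\equiv0$ shows that the unit normal evolves tangentially by $\partial_t\bn=-\nabla^\Sigma\varphi$; and the mean curvature evolves by the linearization of mean curvature, $\partial_t H=-\Delta_\Sigma\varphi-|A|^2\varphi$, in the sign convention fixed in Section \ref{S:Preliminaries} (this is Huisken's computation \cite{H84}, reproduced for the RMCF speed in \cite[Lemma 3.1]{CIMW13} and in Lemma \ref{L:Evolution of the extrinsic geometry}; note that this identity only involves $\varphi$ evaluated along the flow and is insensitive to the fact that the RMCF speed contains the position vector). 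For the RMCF we have $\varphi=-\widetilde H$ with $\widetilde H=H-\tfrac12\langle x,\bn\rangle$, so these specialize to $\partial_t\bn=\nabla^\Sigma\widetilde H$ and $\partial_t H=\Delta_\Sigma\widetilde H+|A|^2\widetilde H$.

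Next I would differentiate $\widetilde H$ directly along the flow. Using $\langle\partial_t x,\bn\rangle=\langle-\widetilde H\bn,\bn\rangle=-\widetilde H$ and $\langle x,\partial_t\bn\rangle=\langle x,\nabla^\Sigma\widetilde H\rangle$ (a genuine tangential pairing, since $\nabla^\Sigma\widetilde H$ is tangent to $\Sigma$), one gets
\[
\partial_t\widetilde H=\partial_t H-\tfrac12\langle\partial_t x,\bn\rangle-\tfrac12\langle x,\partial_t\bn\rangle=\partial_t H+\tfrac12\widetilde H-\tfrac12\langle x,\nabla^\Sigma\widetilde H\rangle .
\]
Substituting $\partial_t H=\Delta_\Sigma\widetilde H+|A|^2\widetilde H$ from the previous step yields $\partial_t\widetilde H=\Delta_\Sigma\widetilde H-\tfrac12\langle x,\nabla^\Sigma\widetilde H\rangle+\bigl(\tfrac12+|A|^2\bigr)\widetilde H$, which is exactly $L\widetilde H$. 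Conceptually this is forced rather than coincidental: the RMCF is the negative gradient flow of the Gaussian area functional $F$, the rescaled mean curvature $\widetilde H$ is its first variation up to the weight $e^{-|x|^2/4}$, and $L$ is the associated second‑variation operator, so the gradient is transported by the Hessian along the flow.

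For the last assertion, I would restrict to an arbitrary compact subinterval $[0,t_0]$ of the smooth existence time of the flow of the closed manifold $\bar M$; there $|A|^2$ is bounded, so $\partial_t\widetilde H=L\widetilde H$ is a uniformly parabolic linear equation with bounded coefficients. Applying the weak maximum principle to $e^{-Ct}\widetilde H$ for $C$ large shows that $\widetilde H\ge0$ is preserved, and the strong maximum principle then upgrades this: if $\widetilde H>0$ everywhere at $t=0$ (rescaled mean convex) it stays strictly positive, and flipping signs gives the rescaled mean concave case. I expect the only real obstacle to be bookkeeping of the sign conventions — $H=-\tr\II$, the outward choice of $\bn$, and the induced signs in the evolution of $\bn$ and $H$ — since the target identity is sign‑sensitive; once these are fixed as in Section \ref{S:Preliminaries} and the appendix, the computation is immediate.
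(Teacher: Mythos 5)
Your computation is correct and is essentially the paper's own argument: the paper simply combines the evolution equations $\partial_t\bn=\nabla^M\widetilde H$, $\partial_t H=\Delta_M\widetilde H+|h|^2\widetilde H$ and $\partial_t T$ from Lemmas \ref{L:Evolution of the extrinsic geometry}--\ref{L:Evolution of T} with $f=-|x|^2/4$ (following \cite{H84} and \cite[Lemma 3.1]{CIMW13}), which is exactly the substitution you carry out, and the maximum-principle step is the same standard argument. No gaps; the sign conventions you use match those fixed in the appendix.
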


\section{Finite time blow up of RMCF}\label{S:Finite time blow up of RMCF}
The goal of this section is to prove the following finite time blow-up of the perturbed hypersurface under RMCF. The proof is similar to the proof of finite-time singularity in \cite{HW19}, and we generalize its argument to outward perturbations.

\begin{theorem}\label{T:finite time singularity}
    Suppose $\Sigma$ is an $n$-dimensional closed smoothly embedded self-shrinker in $\R^{n+1}$. Moreover, $\Sigma$ is not generic and has a non-trivial $k$-th homology (in $\mb{Q}$) class for some $1\leq k\leq n-1$. Let $f$ be a positive function on $\Sigma$. Then, there is an $\epsilon_0>0$ such that for $|s|<\epsilon_0$, $s\neq 0$, the RMCF starting at $\Sigma^{sf}$ has a finite time singularity.
\end{theorem}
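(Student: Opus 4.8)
The plan is to show that the RMCF starting from $\Sigma^{sf}$ cannot exist smoothly for all $t\in[0,\infty)$, which forces a finite-time singularity. The two cases $s>0$ (outward, rescaled mean concave) and $s<0$ (inward, rescaled mean convex) are handled symmetrically by swapping inside and outside, so I will focus on the outward/rescaled-mean-concave case and indicate the flip at the end. First I would record the basic structure: by Lemma \ref{L:Preservation of rescaled mean curvature}, $\widetilde H<0$ is preserved, and $\partial_t x=-\widetilde H\bn$ shows the flow moves strictly outward, so the family $\{M_t\}$ is nested with $M_t$ lying strictly outside $M_{t'}$ for $t>t'$; in particular each $M_t$ lies in the fixed unbounded region outside $\Sigma^{sf}$. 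Now suppose for contradiction the flow is smooth on $[0,\infty)$. Since $\Sigma^{sf}$ is $C^2$-close to the self-shrinker $\Sigma$ for small $|s|$, standard local regularity (pseudolocality, or the fact that a smooth RMCF that exists on a long time interval has uniformly bounded geometry on bounded sets) gives subsequential smooth convergence $M_{t_i}\to M_\infty$ along $t_i\to\infty$, where $M_\infty$ is a (possibly empty, possibly non-compact-looking but actually compact by the nesting inside the outside region and a diameter bound) self-shrinker; by monotonicity of the Gaussian area $F$ under RMCF, $M_\infty$ is a smooth closed self-shrinker and the convergence is in fact as $t\to\infty$, not just subsequentially. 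Moreover $M_\infty$ lies weakly outside $\Sigma^{sf}$, hence strictly outside $\Sigma$ unless $M_\infty=\Sigma$.

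The heart of the argument is to rule out $M_\infty$. Two sub-cases. \emph{First}, $M_\infty=\Sigma$: this is impossible because the flow is strictly monotone outward and started at $\Sigma^{sf}$ which lies strictly outside $\Sigma$, so $M_t$ stays a definite distance outside $\Sigma$ for all $t$ (the outward region swept is increasing), contradicting convergence back to $\Sigma$. More carefully, I would use that $\Sigma$ is a strict barrier: since $\Sigma^{sf}$ is rescaled mean concave and disjoint from $\Sigma$ on its outside, the avoidance/nesting principle for RMCF keeps $M_t$ disjoint from $\Sigma$ uniformly, so the limit is disjoint from or tangent to $\Sigma$; tangency would violate the strong maximum principle applied to two self-shrinkers (one of which, $M_\infty$, is a limit of strictly rescaled-mean-concave surfaces). \emph{Second}, $M_\infty\neq\Sigma$ is some other smooth closed self-shrinker lying strictly outside $\Sigma$, enclosing $\Sigma$ in its interior: here I invoke the topological hypothesis. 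The region $\Omega$ between $\Sigma$ and $M_\infty$ is a compact manifold with boundary $\Sigma\sqcup M_\infty$, and the nontrivial $k$-th rational homology class $\alpha\in H_k(\Sigma;\mathbb Q)$ must either die in $\Omega$ or persist to $M_\infty$. I would run the "parabolic topology" argument from \cite{HW19}: the smooth RMCF foliates $\Omega$ (by the strict monotonicity, distinct time slices are disjoint and they sweep out the closure of $\Omega$ minus $\Sigma$), giving a diffeomorphism $\Sigma\times[0,\infty)\cong \Omega\setminus M_\infty$ compatible with the inclusions, so $H_k(\Sigma;\mathbb Q)\to H_k(\Omega;\mathbb Q)$ is an isomorphism and then $H_k(M_\infty;\mathbb Q)\to H_k(\Omega;\mathbb Q)$ together with Mayer–Vietoris / Alexander duality forces a contradiction with the fact that both $\Sigma$ and $M_\infty$ are closed hypersurfaces bounding in $\R^{n+1}$ — essentially, an infinite smooth monotone flow would require $\Sigma$ and $M_\infty$ to be homologous via a product region, but the entropy/$F$-functional is strictly decreasing along a nonconstant RMCF unless the flow is static, and a static flow means $\Sigma^{sf}$ is itself a self-shrinker, which it is not since $\widetilde H\ne 0$ on it. I expect I can make this cleaner: strict monotonicity of $F(M_t)$ plus smooth convergence forces $F(M_t)\to F(M_\infty)$ with $\frac{d}{dt}F(M_t)=-\int_{M_t}\widetilde H^2 e^{-|x|^2/4}\to 0$, so $\widetilde H\to 0$ in $L^2$; but $\widetilde H$ has a sign and, by the nesting, $|M_t|$ (Gaussian area) is bounded below, so this already says the surfaces converge to a self-shrinker — and the point is just that the flow genuinely moves a nonzero distance outward from $\Sigma^{sf}$ over all time, which together with the uniform bound on the outside region being swept (bounded because $M_\infty$ is compact) is fine; the real contradiction is purely topological and is exactly the one in \cite{HW19, CIMW13}.

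The main obstacle, and where I would spend the most care, is the long-time compactness and the topological step: ensuring that a smooth immortal RMCF really does converge smoothly (with multiplicity one) to a \emph{smooth closed} self-shrinker $M_\infty$ — this needs either Theorem \ref{T:Smooth convergence} (Hershkovitz–White) applied to the rescaled mean concave flow, which gives exactly that singularities/limits are of convex type and smooth, or an independent Brakke-flow compactness plus a White-type regularity argument — and then converting the existence of the smooth foliation of the annular region $\Omega$ between $\Sigma$ and $M_\infty$ into the contradiction with $H_k(\Sigma;\mathbb Q)\neq 0$. For the inward case $s<0$, $\Sigma^{sf}$ is rescaled mean convex, the flow moves strictly inward, is nested the other way, and the same dichotomy applies with "inside"/"outside" swapped; there $M_\infty$ would be a smooth closed self-shrinker enclosed by $\Sigma$, and the identical homology obstruction (or, as noted in the paper, the Colding–Ilmanen–Minicozzi–White finite-time-singularity result) rules it out — indeed in this case no topological hypothesis is needed, and I would remark that the topological hypothesis is only used to handle the outward direction, consistent with the shrinking-sphere example in the introduction.
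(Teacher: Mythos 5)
There is a genuine gap, and it sits exactly where the topological hypothesis has to do its work. Your plan for the outward (rescaled mean concave) case assumes that an immortal RMCF converges, at least subsequentially and smoothly, to a \emph{closed} self-shrinker $M_\infty$, citing nesting plus ``a diameter bound.'' But for an outward-moving flow the nesting only confines $M_t$ to the \emph{unbounded} component, so there is no diameter bound, and the paper's own example shows the scenario you omit really occurs: the RMCF starting from $S^n(\sqrt{2n+\epsilon})$ is rescaled mean concave, exists for all time, and escapes to infinity (its Gaussian area tends to $0$), converging to nothing. So the dichotomy ``$M_\infty=\Sigma$ or $M_\infty$ is another closed shrinker'' is not exhaustive; the limit could be empty, non-compact, or singular, and monotonicity of $F$ does not repair this. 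Worse, the case you propose to kill with the homology hypothesis is already impossible for a soft reason (two closed self-shrinkers can never be disjoint, by avoidance applied to the two self-shrinking MCFs, which both collapse to the origin at time $0$), so in your scheme the hypothesis $H_k(\Sigma;\mathbb{Q})\neq 0$ would never actually be used --- yet it is necessary, as the sphere example shows. This means the topology must be brought to bear precisely at the step you take for granted, namely ruling out long-time existence with escape to infinity, and your argument gives no mechanism for that. (Theorem \ref{T:Smooth convergence} does not fill the hole either: it describes singularities of the flow, not behavior at time infinity.)

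The paper's proof avoids long-time analysis and the sign of $\widetilde{H}$ altogether. It passes to the corresponding MCF $\widetilde{M}^s_\tau$ on $[-1,0)$ (Lemma \ref{L:RMCF finite time equivalent to MCF short time}) and runs it against the self-shrinking flow $\sqrt{-\tau}\,\Sigma$. A nontrivial $k$-cycle $K$ in the product region between $\Sigma$ and $\Sigma^{sf}$ survives, via Mayer--Vietoris, in the homology of the inside of $\Sigma^{sf}$ or of the outside of $\Sigma$, and hence links the complementary region bounded by the other hypersurface. If $\widetilde{M}^s_\tau$ stayed smooth up to time $0$, the avoidance principle (Lemma \ref{L:avoidance}) would preserve a definite distance, hence preserve this linking, while $\sqrt{-\tau}K$ shrinks into a star-shaped (contractible) neighborhood of the origin, killing the linking --- a contradiction. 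If you want to salvage your route, you would need an independent argument that the homology class forces the outward flow to stay in a bounded region and to have Gaussian area bounded below, which is essentially the linking argument in disguise; as written, the convergence step fails and with it the proof of the outward case.
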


According to the classification theorem of \cite{B16}, the only genus $0$ closed embedded self-shrinker in $\R^3$ is the sphere with radius $2$, which is generic. Thus, in $\R^3$, the topological assumption holds true automatically.

\begin{corollary}
        Suppose $\Sigma$ is a $2$-dimensional non-generic closed smoothly embedded self-shrinker in $\R^{3}$. Let $f$ be a positive function on $\Sigma$. Then there is an $\epsilon_0>0$ depending on $f$ such that for $|s|<\epsilon_0$, $s\neq 0$, the RMCF starting at $\Sigma^{sf}$ has a finite time singularity.
\end{corollary}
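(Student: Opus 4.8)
The plan is to reduce the corollary immediately to Theorem \ref{T:finite time singularity}; the only thing that needs checking is that the topological hypothesis there is automatic once the ambient space is $\R^3$. Since here $n=2$, the single admissible value of $k$ with $1\le k\le n-1$ is $k=1$, so it suffices to show that $H_1(\Sigma;\mb{Q})\neq 0$.

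First I would observe that a closed embedded hypersurface $\Sigma^2\subset\R^3$ separates $\R^3$ into an inside and an outside and is the topological boundary of each, hence is two-sided and therefore orientable. Thus $\Sigma$ is a closed orientable surface, classified up to diffeomorphism by its genus $g\ge 0$, and $H_1(\Sigma;\mb{Q})\cong\mb{Q}^{2g}$. Consequently $H_1(\Sigma;\mb{Q})=0$ if and only if $g=0$, i.e.\ if and only if $\Sigma$ is a topological $2$-sphere.

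Next I would invoke Brendle's classification theorem \cite{B16}: the only closed embedded self-shrinker of genus $0$ in $\R^3$ is the round sphere $S^2(2)=S^2(\sqrt{2\cdot 2})$, which by definition is a generic self-shrinker. Since $\Sigma$ is assumed non-generic, it cannot be diffeomorphic to $S^2$; hence $g\ge 1$ and $H_1(\Sigma;\mb{Q})\neq 0$. With the hypothesis of Theorem \ref{T:finite time singularity} thereby verified for $k=1$, that theorem applies directly and yields an $\epsilon_0>0$, depending on $f$, such that for every $s$ with $0<|s|<\epsilon_0$ the RMCF starting at $\Sigma^{sf}$ develops a finite-time singularity, which is exactly the assertion.

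There is essentially no analytic obstacle here, since all the substantive work is already contained in Theorem \ref{T:finite time singularity}; the only points demanding care are the correct normalization of the radius in Brendle's theorem together with its identification with the generic shrinker $S^k(\sqrt{2k})$ at $k=n=2$, and the (standard, but worth stating) fact that embeddedness of a closed hypersurface in $\R^{n+1}$ forces orientability of $\Sigma$.
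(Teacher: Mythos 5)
Your proof is correct and takes essentially the same route as the paper: the paper's justification is exactly the remark that, by Brendle's classification \cite{B16}, the only genus-zero closed embedded self-shrinker in $\R^3$ is the round sphere $S^2(2)$, which is generic, so a non-generic $\Sigma$ has genus at least $1$, the hypothesis of Theorem \ref{T:finite time singularity} holds automatically with $k=1$, and that theorem applies. You simply spell out the standard orientability and surface-classification details that the paper leaves implicit.
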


We need some lemmas to prove Theorem \ref{T:finite time singularity}. First, we notice that Theorem \ref{T:finite time singularity} is equivalent to a short time blow-up property of MCF. Recall that if $\widetilde M_t$ is a RMCF defined for $t\in[0,\infty)$, then $M_\tau = \sqrt{-\tau}\widetilde M_{-\log(-\tau)}$ is a MCF, defined for $\tau\in[-1,0)$ (see Section \ref{S:Preliminaries}).

\begin{lemma}\label{L:RMCF finite time equivalent to MCF short time}
Let $\widetilde M_t$ be a RMCF. Then $\widetilde M_t$ having a finite time singularity is equivalent to MCF $M_\tau=\sqrt{-\tau}\widetilde M_{-\log(-\tau)}$ having a singularity at time $\tau=T$ with $T<0$.
\end{lemma}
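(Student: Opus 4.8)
The plan is to derive this from the spacetime correspondence between RMCF and MCF recalled in Section~\ref{S:Preliminaries}; the only thing to verify is that this correspondence carries smooth flows to smooth flows and singular times to singular times, after which the statement reduces to a computation with the change of variable $\tau=-e^{-t}$.

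First I would fix notation: let $[0,T_{\max})$ be the maximal interval on which the RMCF $\{M_t\}$ is a smooth flow, so that ``$\{M_t\}$ has a finite time singularity'' means precisely $T_{\max}<\infty$. The substitution $\tau=-e^{-t}$ is a smooth strictly increasing bijection of $[0,\infty)$ onto $[-1,0)$ with inverse $t=-\log(-\tau)$; it sends $t=0$ to $\tau=-1$, finite $t$ to $\tau\in(-1,0)$, and $t\to\infty$ to $\tau\to 0^-$. By Section~\ref{S:Preliminaries} the family $\widetilde M_\tau:=\sqrt{-\tau}\,M_{-\log(-\tau)}$ is a MCF. For any $\tau_0\in(-1,0)$, on the compact interval $\tau\in[-1,\tau_0]$ the dilation factor $\sqrt{-\tau}$ lies in $[\sqrt{-\tau_0},1]$, while $d\tau/dt=e^{-t}$ and its reciprocal are bounded on the corresponding compact $t$-interval; hence the spacetime map $(x,t)\mapsto(\sqrt{-\tau}\,x,\tau)$ restricts on each such slab to a smooth diffeomorphism with all derivatives (and all derivatives of its inverse) bounded, conjugating the RMCF to the MCF.

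Next I would transfer regularity. Since a MCF, respectively a RMCF, develops a singularity at a given time if and only if the second fundamental form is unbounded as that time is approached, and since the conjugating map above has uniformly controlled derivatives on every slab $[-1,\tau_0]$, the flow $\{M_t\}$ is smooth on $[0,S)$ and singular at $t=S\in(0,\infty)$ if and only if $\{\widetilde M_\tau\}$ is smooth on $[-1,T)$ and singular at $\tau=T$, where $T=-e^{-S}\in(-1,0)$. Because $S\mapsto-e^{-S}$ is a bijection $(0,\infty)\to(-1,0)$, the RMCF having a finite time singularity is equivalent to the MCF $\sqrt{-\tau}\,M_{-\log(-\tau)}$ developing a singularity at some time $T$ with $-1<T<0$, in particular $T<0$. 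Conversely, if $\{M_t\}$ is smooth for all $t\in[0,\infty)$, then $\{\widetilde M_\tau\}$ is smooth on all of $[-1,0)$ and its first singular time is $\tau=0$.

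The only delicate point, which I regard as the main obstacle, is that the spatial dilation $x\mapsto\sqrt{-\tau}\,x$ degenerates as $\tau\to 0^-$ and could a priori distort the singular behaviour; this is harmless here precisely because we compare the two flows only on time intervals $[-1,T]$ with $T$ bounded away from $0$, where the dilation factor is uniformly comparable to $1$, and the degeneration $\sqrt{-\tau}\to0$ occurs only in the limit $t\to\infty$, which lies outside the notion of ``finite time.'' With this observation the equivalence follows.
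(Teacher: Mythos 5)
Your argument is correct and is exactly the route the paper intends: the paper states this lemma without a written proof, relying on the correspondence $\tau=-e^{-t}$, $\widetilde M_\tau=\sqrt{-\tau}\,M_{-\log(-\tau)}$ recalled in Section~\ref{S:Preliminaries}, and your write-up simply makes explicit that this bijection $[0,\infty)\to[-1,0)$ sends finite singular times to singular times $T\in(-1,0)$ and that smoothness transfers on each compact slab where the dilation is uniformly controlled. Nothing is missing.
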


From now on we will fix a positive perturbation function $f$. The RMCF starting at $\Sigma^{sf}$ is denoted by $\widetilde M^s_t$, and the corresponding MCF $\sqrt{-\tau}\widetilde M^s_{-\log(-\tau)}$ is denoted by $M^s_\tau$. $M^s_\tau$ is a MCF starting at time $-1$. We only need to prove that $M^s_\tau$ has a singularity before time $0$.

\medskip
Next, we recall the avoidance principle of MCF of closed hypersurfaces.

\begin{lemma}\label{L:avoidance}
Suppose $\{M_\tau ^1\}_{\tau \in[0,T)}$ and $\{M_\tau^2\}_{\tau\in[0,T)}$ are two MCFs of smoothly embedded closed hypersurfaces. If $M_0^1\cap M_0^2=\emptyset$, and the distance between $M_0^1$ and $M_0^2$ is $d$, then the distance between $M_\tau^1$ and $M_\tau^2$ is at least $d$.
\end{lemma}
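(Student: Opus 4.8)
The plan is to run the parabolic maximum principle on the squared distance between the two flows, in the form of Hamilton's trick for the minimum of a smooth family. (This avoidance principle is classical, so one option is simply to cite it, or to deduce it from the comparison principle for weak set flows; but the direct argument is short and I would include it.)

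I would write the two flows as smooth embeddings $F_i\colon \bar M^i\times[0,T)\to\R^{n+1}$ of closed $n$-manifolds, with $\partial_t F_i$ equal to the mean curvature vector $\bH_i$ (tangential reparametrisations being irrelevant for what follows), $i=1,2$, and set
\[
\Psi(p,q,t)=|F_1(p,t)-F_2(q,t)|^2,\qquad \rho(t)=\min_{(p,q)}\Psi(p,q,t).
\]
By compactness the minimum is attained for each $t$, and since $\widetilde M_0^1\cap\widetilde M_0^2=\emptyset$ we have $\rho(0)=d^2>0$. The function $\rho$ is locally Lipschitz, and by Hamilton's trick it suffices to prove that $\partial_t\Psi\ge 0$ at every minimizing pair, at every time $t_0$ with $\rho(t_0)>0$; a short bootstrap on the connected component of $\{\rho>0\}$ containing $0$ then shows $\rho\ge d^2$ on all of $[0,T)$, i.e.\ $\dist(\widetilde M_t^1,\widetilde M_t^2)=\sqrt{\rho(t)}\ge d$.

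So I would fix such a $t_0$ and a minimizer $(p_0,q_0)$, and put $x_0=F_1(p_0,t_0)$, $y_0=F_2(q_0,t_0)$; since $\rho(t_0)>0$ we have $x_0\neq y_0$. The first-order conditions, obtained by varying $p$ and then $q$, force $x_0-y_0$ to be orthogonal to both $T_{x_0}\widetilde M_{t_0}^1$ and $T_{y_0}\widetilde M_{t_0}^2$, so these tangent hyperplanes coincide with $(x_0-y_0)^{\perp}$ and $\nu:=(x_0-y_0)/|x_0-y_0|$ is a common unit normal. For a unit vector $e\in(x_0-y_0)^{\perp}$ choose curves $\gamma_1,\gamma_2$ in $\widetilde M_{t_0}^1,\widetilde M_{t_0}^2$ through $x_0,y_0$ with velocity $e$; then $s\mapsto|\gamma_1(s)-\gamma_2(s)|^2$ has a minimum at $s=0$, so its second derivative there is $\ge 0$, and since its first derivative vanishes and $x_0-y_0$ is normal (so only the normal part $\II_i(e,e)$ of $\ddot\gamma_i(0)$ contributes) this reads $\langle x_0-y_0,\II_1(e,e)-\II_2(e,e)\rangle\ge 0$, where $\II_i$ is the vector-valued second fundamental form of $\widetilde M_{t_0}^i$. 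Summing over an orthonormal basis of $(x_0-y_0)^{\perp}$ and using $\bH_i=\tr\II_i$,
\[
\partial_t\Psi(p_0,q_0,t_0)=2\langle x_0-y_0,\,\bH_1-\bH_2\rangle=2\langle x_0-y_0,\,\tr\II_1-\tr\II_2\rangle\ge 0,
\]
which is what was needed; by Hamilton's trick $\rho'(t_0)=\partial_t\Psi(p_0,q_0,t_0)\ge 0$.

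The routine points I would still have to pin down are the precise form of Hamilton's trick in this moving-domain setting (Lipschitz continuity of $\rho$, the a.e.\ identity $\rho'=\partial_t\Psi$ at any minimizer, and the bootstrap forbidding $\rho$ from reaching $0$) and the sign bookkeeping in the second-variation inequality. I do not expect any of this to be a genuine obstacle: the entire geometric content is the observation that at the closest pair the joining segment is a common normal, and that the second variation of distance at a minimum controls the two second fundamental forms precisely in the combination $\langle x_0-y_0,\tr\II_i\rangle$ that the mean-curvature-flow velocity sees.
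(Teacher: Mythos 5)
Your argument is correct: it is exactly the standard parabolic maximum principle applied to the minimal distance between the two flows (first-order conditions making the joining segment a common normal, the second-variation inequality producing $\langle x_0-y_0,\bH_1-\bH_2\rangle\ge 0$, and Hamilton's trick), and the remaining points you flag are indeed routine. This is essentially the same approach as the paper, which does not reprove the lemma but cites precisely this maximum-principle argument from Mantegazza's lecture notes.
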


The proof is a standard application of the parabolic maximum principle. We refer the readers to \cite[Theorem 2.1.1]{M11} for proof.

\begin{remark}
The avoidance principle is also used by White \cite{W00} to define a weak form of mean curvature flow. See also \cite{HK17}.
\end{remark}

\medskip
We need some topological arguments. Let us recall some facts in algebraic topology which might be well-known to the experts. For the fundamental algebraic topology results, we refer the readers to \cite{H02}.

Let $K$ be a $k$-dimensional closed submanifold in $\R^{n+1}$ and $L$ be a compact, locally contractible topological subspace in $\R^{n+1}$ with $K \cap L = \emptyset$. Since they are both compact, we can assume that they both lie in a big ball $B_R$ of $\R^{n+1}$. Then we can compactify $\R^{n+1}$ to get a sphere $S^{n+1}$. Then we say that $K$ has a non-trivial linking with $L$ if $[K]\in H_{k}(S^{n+1}\backslash L)$ is non-trivial. The non-trivial linking property is an isotopic invariance.

Now we can prove the main theorem of this section. The idea is similar to the proof of Theorem 2 in \cite{HW19}, with which one could show the existence of finite-time singularity under both inward and outward perturbations. The proof for outward perturbations needs more topological ingredients.

\begin{proof}[Proof of Theorem \ref{T:finite time singularity}]
We prove the case of $s>0$. The case $s<0$ has been proved in \cite{CIMW13}, and can be proved similarly using our argument here. Define $A$ to be the closure of the inside of $\Sigma^{sf}$, $A'$ to be the closure of the inside of $\Sigma$, and define $B$ to be the closure of the outside of $\Sigma$, $B'$ to be the closure of the outside of $\Sigma^{sf}$. Then $A\cap B=\bigcup_{0\leq a\leq s}\Sigma^{af}$, which is topologically $\Sigma\times[0,s]$. The assumption that $\Sigma$ has a nontrivial $k$-th homology implies that we can pick a $k$-th cycle $K\subset A\cap B$ which is non-trivial in $H_k(A\cap B)$. Since we are working in $\Q$ coefficient homology, we can always pick this $K$ to be an embedded $k$-dimensional submanifold (see Section 4 of \cite{S04}). 

$A\cup B$ is the whole $\R^{n+1}$, and after a compactification at infinity, it is $S^{n+1}$. By Mayer–Vietoris sequence, we have the following long exact sequence 
\[\cdots
\rightarrow
H_{k+1}(S^{n+1})
\rightarrow
H_k(A\cap B)
\rightarrow
H_k(A)\oplus H_k(B)
\rightarrow
H_k(S^{n+1})
\rightarrow
\cdots
\]
Then for $1\leq k\leq n-1$, we obtain an isomorphism $H_k(A\cap B)\cong H_k(A)\oplus H_k(B)$. Then $[K]$ is also non-trivial in $H_k(A)$ or $H_k(B)$.

If $[K]$ is non-trivial in $H_k(A)$, since $K\subset A\cap B$, we can pick $K\subset\Sigma$. Then $[K]$ is also non-trivial in $H_k(S^{n+1}\backslash B^i)$, where $B^i$ is the interior of $B$. Moreover, there is a deformation retraction from $B^i$ to $B'$. Thus, $[K]$ is also non-trivial in $H_k(S^{n+1}\backslash B')$. In other words, $K$ has a non-trivial linking with $B'$. Besides, if the distance between $\Sigma^{sf}$ and $\Sigma$ is at least $d$, then the $d/2$-tubular neighbourhood $N_{d/2}K$ does not intersect with $B'$.

Suppose $M_\tau^s$ is the MCF starting at $\Sigma^{sf}$. We use $K_\tau$ to denote $\sqrt{-\tau}K$, and $B'_\tau$ to denote the outside of $M_\tau^s$. Now we argue by contradiction. If $M_\tau^s$ does not have a singularity before $\tau=0$, then $\sqrt{-\tau}\Sigma$ and $M_\tau^s$ are all isotopies (parametraized by $t$) of hypersurfaces by the avoidance principle, Lemma \ref{L:avoidance}. Moreover, Lemma \ref{L:avoidance} implies that $B'_\tau$ does not intersect with $N_{d/2}K_\tau$. However, when $\tau$ is sufficiently close to $0$, $N_{d/2}K_\tau$ is contractible (it is actually star-shaped), and then $K_\tau$ cannot represent a non-trivial homology  class in $H_k(S^{n+1} \backslash B'_\tau) \cong H_k(S^{n+1}\backslash B')\cong H_k(S^{n+1}\backslash B^i)$, which is a contradiction.

If $[K]$ is non-trivial in $H_k(B)$, a similar argument works. Now we can pick $K$ in $\Sigma^{sf}$, and $N_{d/2}K$ has a non-trivial linking with $A'$. Again we can argue by contradiction to show that $M_\tau^s$ must have a singularity before time $\tau=0$. This concludes the proof.
\end{proof}

\section{Non-collapsing}\label{S:Non-collapsing}
In this section, we study the non-collapsing property of a RMCF starting at a perturbed hypersurface. The main regularity theorem is due to \cite{HW19} and we refer the readers to there. We show that the RMCF and its blow-up sequence satisfy a non-collapsing property similar to the non-collapsing property of a mean convex MCF, and discuss some consequences. The precise computations are very similar to \cite{A12}, and we include them in the appendix for the convenience of the readers.

Let us first review some basic facts about the tangent flow of MCF with additional forces.

\begin{theorem}\label{T:self-shrinking tangent flow}
    Suppose $M_t$ is a MCF with $L^\infty$ additional forces. Then the tangent flow of $M_t$ is a weak homothetic shrinking MCF, i.e. it is given by $\{\sqrt{-t}\Sigma\}_{t\in(-\infty,0)}$, where $\Sigma$ is an integral varifold satisfying the self-shrinker's equation
    \[\vec{H}+\frac{x^\bot}{2}=0.\]
\end{theorem}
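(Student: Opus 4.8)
The plan is to reduce the statement to Huisken's monotonicity formula adapted to the presence of a bounded force, together with Ilmanen--White-type compactness for Brakke flows. First I would recall that a MCF with $L^\infty$ additional force means a family $\{M_t\}$ evolving by $\partial_t x = -H\bn + V$ where $\|V\|_{L^\infty}\le C$; in our application $V = \frac{\langle x,\bn\rangle}{2}\bn$, which is bounded on any compact region in spacetime, so the hypotheses are met on any parabolic neighborhood of a singular point. Fix the singular point to be the spacetime origin after translation. The key step is to check that Huisken's weighted Gaussian area $\Theta(r) = \int_{M_{-r^2}} \rho_{(0,0)}\, d\mu$, with $\rho$ the backward heat kernel, is \emph{almost monotone}: differentiating along the flow produces the usual nonpositive term $-\int |H\bn - \frac{x^\bot}{2r^2}|^2 \rho$ plus an error controlled by $\|V\|_{L^\infty}$ times lower-order quantities, so that $e^{C\sqrt{r}}\Theta(r)$ (or a similar corrected quantity) is genuinely monotone in $r$. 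This is exactly the computation in Huisken's original paper together with the force-term modification that appears, e.g., in work on MCF in Riemannian ambient spaces.

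Next I would perform the parabolic rescaling $M^\lambda_t := \lambda M_{\lambda^{-2} t}$ and let $\lambda \to \infty$. Under this rescaling the force term $V$ scales like $\lambda^{-1}$ times a bounded quantity on any fixed compact set, hence \emph{vanishes in the limit}; so the rescaled flows converge (subsequentially, as Brakke flows, by Ilmanen's compactness theorem and the local area bounds coming from the almost-monotonicity) to a Brakke flow $\{\Sigma_t\}$ that is an honest (weak) mean curvature flow with \emph{no} force term. By the almost-monotonicity, the Gaussian density of the limit flow is constant in the scaling parameter, which forces the limit to be self-similar: $\Sigma_t = \sqrt{-t}\,\Sigma$ for $t<0$, where $\Sigma$ is an integral varifold satisfying $\vec H + \frac{x^\bot}{2} = 0$ in the sense of varifolds (i.e. it is a shrinker in the generalized sense). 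The identification of the limit as homothetic from constancy of the density ratio is the standard rigidity case of Huisken monotonicity, as in Ilmanen and White.

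The main obstacle is the \emph{error estimate in the almost-monotonicity}: one must verify that the contribution of the $L^\infty$ force to $\frac{d}{dr}\Theta(r)$ is integrable in $r$ near $0$ and produces only a multiplicative correction $e^{C\sqrt r}$ (rather than something that blows up), so that $\Theta(0^+)$ exists and the density-constancy argument goes through. Concretely, after rescaling one needs the force contribution on each rescaled time-slice to be $o(1)$ uniformly on compact sets and summably small in the scale, which is where the exponent $\tfrac12$ on $\sqrt r$ (matching the scaling of the force) enters. A secondary technical point is that one is working with weak (Brakke) flows throughout, so Brakke's local regularity and the compactness theory must be invoked to make sense of $\vec H$, $x^\bot$ and the convergence; but since the excerpt permits us to cite the regularity framework of \cite{HW19} and the standard references, I would treat these as black boxes and concentrate the argument on the modified monotonicity formula.
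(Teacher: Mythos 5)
The paper does not prove this theorem itself: it notes that the force-free case is due to Huisken, White and Ilmanen, and refers to \cite{S18} for the case of an $L^\infty$ force, which is established there by exactly the route you outline (almost-monotonicity of the Gaussian density with an integrable error from the bounded force, parabolic rescaling under which the force term vanishes, Brakke-flow compactness, and the rigidity case of the monotonicity formula to identify the limit as a homothetic shrinker). Your sketch is correct and is essentially the same argument as the paper's cited source; the only cosmetic point is that the correction factor from an $L^\infty$ force is naturally of the form $e^{Cr^2}$ rather than $e^{C\sqrt r}$, though any factor tending to $1$ as $r\to 0$ suffices.
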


If the additional force is zero, i.e. $M_t$ is a MCF, this theorem was first proved by Huisken in \cite{H90} with an extra curvature assumption. Later White \cite{W94} and Ilmanen \cite{I95} proved this theorem for a MCF without curvature assumptions. The proof for a MCF with $L^\infty$ additional force is similar, and we refer the readers to \cite{S18} for proof. We remark that a RMCF of closed hypersurfaces can be viewed as a MCF with $L^\infty$ additional forces.

\bigskip

In the rest of this section, we study the non-collapsing result of a RMCF.

\begin{definition}
    Given $\delta>0$. A rescaled mean convex hypersurface $M$ bounding an open region $\Omega$ in $\mb{R}^{n+1}$ is {\it rescaled-$\delta$-non-collapsed} if
    for every $x \in M$ there is an open ball $B$ of radius $\delta/\HH$ contained in $\Omega$ with $x \in \partial \Omega$.
\end{definition}

This definition is a natural generalization of the definition of non-collapsing of a mean convex hypersurface in \cite{SW09} and \cite{A12}. Note that given a closed hypersurface $M\subset\R^{n+1}$, a furthest point on $M$ must have positive mean curvature. Thus there is no closed mean concave hypersurface in $\R^{n+1}$. Nevertheless, there are many closed rescaled mean concave hypersurfaces, and they play important roles in this paper. So we also define the following types of non-collapsing for these hypersurfaces.

\begin{definition}
    Given $\delta<0$. A rescaled mean concave hypersurface $M$ bounding an open region $\Omega$ in $\mb{R}^{n+1}$ is {\it rescaled-$ \delta$-non-collapsed} if
    for every $x \in M$ there is an open ball $B$ of radius $\delta / \HH$ contained in $\Omega$ with $x \in \partial \Omega$.
\end{definition}

Given a hypersurface $M$, we define a function on $M \times M$ by
    \begin{align*}
        \ZZ (x,y) = \frac{\HH}{2} \| y - x \|^2 + \delta \big\langle y-x , \bn(x) \big\rangle . 
    \end{align*}
Then we have the characterization:
\begin{lemma}\label{L:Equivalence of nonnegative Z tilde}
    Given $\delta>0$. A rescaled mean convex hypersurface $M$ is rescaled-$\delta$-non-collapsed if and only if $\ZZ \geq 0$ for all $x,y \in M$.
    A rescaled mean concave hypersurface $M$ is rescaled-$-\delta$-non-collapsed if and only if $\ZZ \leq 0$ for all $x,y \in M$.
\end{lemma}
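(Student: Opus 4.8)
The statement is a geometric characterization of the one-sided ball condition in terms of the sign of $\ZZ$, and the plan is to follow the classical argument of Andrews \cite{A12} (adapted to the rescaled setting) almost verbatim, the only new feature being the bookkeeping of signs when $\delta<0$. I will first treat the rescaled mean convex case. Fix $x\in M$ and consider, for $r>0$, the ball $B_r(x-r\bn(x))$ which is internally tangent to $M$ at $x$ (here $\bn$ points outward, so this ball sits on the $\Omega$ side). A direct computation shows that a point $X(y)\in M$ lies \emph{outside} the open ball $B_r(x-r\bn(x))$ precisely when
\[
\|X(y)-X(x)\|^2 + 2r\,\langle X(y)-X(x),\bn(x)\rangle \geq 0 .
\]
Taking $r=\delta/\HH(x)$ and multiplying by $\HH(x)/2>0$ we recover exactly $\ZZ(x,y)\geq 0$. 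Hence ``$B_{\delta/\HH(x)}(x-\tfrac{\delta}{\HH(x)}\bn(x))\subset\oli\Omega$'' is equivalent to ``$\ZZ(x,y)\geq 0$ for all $y\in M$'', because $M=\pa\Omega$ is compact and a ball internally tangent to $M$ at $x$ lies in $\oli\Omega$ if and only if it avoids the interior of every competitor, i.e. if and only if it contains no point of $M$ in its interior other than $x$.

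\medskip

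The only subtlety is the passage from ``the tangent ball of that specific radius avoids $M$'' to ``\emph{some} ball of that radius through $x$ lies in $\Omega$'', which is the literal content of the definition of rescaled-$\delta$-non-collapsed. For one direction this is immediate: if $\ZZ\geq 0$ everywhere then the specific internally tangent ball of radius $\delta/\HH(x)$ works. For the converse, suppose $M$ is rescaled-$\delta$-non-collapsed and let $B=B_{\delta/\HH(x)}(p)$ be the guaranteed ball with $x\in\pa B\cap\pa\Omega$ and $B\subset\Omega$. Since $x\in\pa\Omega=M$ and $B$ is an open ball contained in the open set $\Omega$ with $x$ on its boundary, the inward unit normal to $\pa B$ at $x$ must agree with the inward normal $-\bn(x)$ to $M$ (any other configuration would force $B$ to cross to the $\Omega$-complement side), so $p=x-\tfrac{\delta}{\HH(x)}\bn(x)$ and $B$ is exactly the internally tangent ball above; then $B\subset\Omega$ gives $\ZZ(x,\cdot)\geq 0$ as computed. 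The rescaled mean concave case with $\delta<0$ is handled by the same computation: now $\HH(x)<0$ and $\delta<0$, so $r=\delta/\HH(x)>0$ is still a genuine positive radius, the internally tangent ball is again $B_r(x-r\bn(x))=B_{\delta/\HH(x)}(x-\tfrac{\delta}{\HH(x)}\bn(x))$, and ``$X(y)$ outside this ball'' now reads, after multiplying the displayed inequality by $\HH(x)/2<0$ (which reverses it), as $\ZZ(x,y)\leq 0$. Again compactness of $M=\pa\Omega$ and the tangency-normal argument upgrade this to the stated equivalence.

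\medskip

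I expect the main obstacle to be purely expository rather than mathematical: making the ``a tangent ball through $x$ of radius $r$ avoids the interior of $M$ iff it lies in $\oli\Omega$'' step fully rigorous requires invoking that $M$ is a smooth compact embedded hypersurface bounding $\Omega$, so that the normal direction at $x$ is unambiguous and ``lies inside'' is genuinely equivalent to ``contains no point of $M$ in its interior''. One should also note that $\ZZ(x,x)=0$ automatically, so the inequality is never vacuous and is sharp at the diagonal; and that the value of $\HH$ entering $\ZZ$ is $\HH(x)$ (it depends on the base point $x$, not $y$), which is why the computation localizes cleanly at each $x$. With these remarks in place the proof is a two-line computation in each case plus the tangency observation.
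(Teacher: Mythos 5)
Your proposal is correct and is essentially the argument the paper itself invokes: its proof of Lemma \ref{L:Equivalence of nonnegative Z tilde} is just a citation to Andrews \cite{A12}, and your tangent-ball computation (a point $X(y)$ avoids the interior ball of radius $\delta/\HH(x)$ tangent at $x$ iff $\|X(y)-X(x)\|^2+2r\langle X(y)-X(x),\bn(x)\rangle\geq 0$, then multiply by $\HH(x)/2$, with the sign reversing in the concave case where the parameter is negative) together with the tangency/normal identification of the guaranteed ball is exactly that argument written out. No gaps.
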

\begin{proof}
    The proof is the same as in \cite{A12}.
\end{proof}

Similar to \cite{A12}, we can prove the following non-collapsing theorem.

\begin{theorem}\label{T:Shrinker delta non collapsed}
    Let $M^n$ be a closed manifold, and $F: M^n \times [0,T) \to \mb{R}^{n+1}$ be a family of smooth embeddings evolving by the rescaled mean
    curvature flow, with positive rescaled mean curvature (negative rescaled mean curvature resp.). If $\widetilde M_0 = F(M, 0)$ is rescaled-$\delta$-non-collapsed (rescaled-$-\delta$-non-collapsed resp.) for some $\delta > 0 $, then $\widetilde M_t = F(M, t)$ is 
    also rescaled-$\delta$-non-collapsed (rescaled-$-\delta$-non-collapsed resp.) for every $t \in [0,T) $. 
\end{theorem}

The proof is based on Andrews' proof in \cite{A12} with necessary modification. For completeness, we provide proof in the Appendix. 

\begin{remark}\label{R:RMCF blow up}
    If $F:M^n \times [0,T) \to \mb{R}^{n+1}$ is a RMCF defined as in (\ref{D:Curve under RMCF}), then, for any $\sigma \in \mb{R}_{+}$ and $y \in \mb{R}^{n+1}$, $T\in(0,\infty)$, the parabolic rescaling of the RMCF   $F^{\sigma,(y,T)}(x,t)=\sigma (F(x,T+\sigma^{-2} t)-y)$ satisfies the evolution equation:

    \begin{equation}\label{E:Evolution of RMCF after rescaling}
        \pa_t F^{\sigma,(y,T)} = -\left(H - \frac{1}{2\sigma^2}\langle F^{\sigma,(y,T)},\bn \rangle-\frac{\langle y,\bn\rangle}{2\sigma}\right)\bn \ .
    \end{equation}

This evolution equation also corresponds to the first variation of weighted area with density $e^f$ for $f = - \frac{|x-\sigma y|^2}{4\sigma^2}$. A version of Theorem \ref{T:Shrinker delta non collapsed} for a parabolic rescaling of RMCF also holds true, see Remark \ref{R: Rescaling non collapsed}.

\end{remark}

\begin{remark}
    Remark 5 in \cite{A12} suggests that a RMCF preserves not only the non-collapsing property from ``inside" region bounded by the closed hypersurface, but also the non-collapsing from ``outside" region which is not bounded by the closed hypersurface. So the computation actually yields a two-sided non-collapsing along the RMCF.
    
    From now on, we will use {\it rescaled-$|\delta|$-non-collapsing} to denote the non-collapsing from the inside and the outside at the same time.
\end{remark}

Rescaled-$|\delta|$-non-collapsing (from both inside and outside) implies the 
following curvature pinching result.

\begin{corollary}
    Suppose $M_0$ is a closed embedded rescaled mean convex/rescaled mean concave hypersurface in $\R^{n+1}$, which is rescaled-$|\delta|$-non-collapsed from both inside and outside. Then we have the following curvature pinching estimate
    \begin{equation}
        -|\delta|^{-1}|\HH| g\leq A\leq |\delta|^{-1}|\HH| g,
    \end{equation}
    where $A$ is the second fundamental form and $g$ is the metric tensor. 
\end{corollary}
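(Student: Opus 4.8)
The goal is the curvature pinching estimate
\[
-|\delta|^{-1}|\HH|\, g\leq A\leq |\delta|\,|\HH|\, g,
\]
which should follow from the rescaled-$|\delta|$-non-collapsing condition purely \emph{pointwise}, independently of the flow. The plan is to argue at a fixed point $x_0\in M_0$, in the two directions separately. By definition of rescaled-$\delta$-non-collapsing from inside, there is an interior ball $B$ of radius $r=|\delta|/|\HH(x_0)|$ tangent to $M_0$ at $x_0$ and contained in the region $\Omega$ bounded by $M_0$. First I would record the elementary geometric fact that if an open ball of radius $r$ sits on the inner side of a smooth hypersurface and touches it at $x_0$, then $M_0$ curves at least as much as $\partial B$ in every tangent direction, i.e. all principal curvatures at $x_0$ (with the convention matching the sign of $\HH$) are bounded below by $-1/r$; concretely, comparing the second fundamental forms of $M_0$ and the sphere $\partial B$ at the common tangent point $x_0$, $A_{M_0}(x_0)\geq A_{\partial B}(x_0)=-\frac{1}{r}\,g=-\frac{|\HH(x_0)|}{|\delta|}\,g$. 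This is exactly the lower bound. Symmetrically, the rescaled-$|\delta|$-non-collapsing from the \emph{outside} gives an exterior ball of the same radius $r=|\delta|/|\HH(x_0)|$, and the analogous comparison yields $A_{M_0}(x_0)\leq \frac{1}{r}\,g$; but this is not quite the claimed upper bound $|\delta|\,|\HH|\,g$.

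To obtain the sharper upper bound $A\leq |\delta||\HH|g$, I would instead extract it from the other non-collapsing ball. Writing $\lambda_{\max}$ for the largest principal curvature at $x_0$: the interior ball of radius $|\delta|/|\HH|$ being contained in $\Omega$ forces, by a standard argument (the interior ball cannot ``fit'' if $M_0$ bends outward too strongly), the inequality $\lambda_{\max}\leq |\HH|/|\delta|\cdot(\text{something})$. More precisely, I expect the correct route is via Lemma \ref{L:Equivalence of nonnegative Z tilde}: rescaled-$\delta$-non-collapsedness is equivalent to $\ZZ(x,y)\geq 0$ for all $x,y$, and differentiating $\ZZ(x,y)$ twice in $y$ at $y=x$ (where $\ZZ$ and its first $y$-derivative vanish) gives that the Hessian in $y$ is nonnegative, which unwinds to $\HH g - 2\delta A\geq 0$ at $x$, i.e. $A\leq \frac{\HH}{2\delta}g$. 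Hmm — the constant here would be $\frac{1}{2|\delta|}|\HH|$, so to land exactly on $|\delta||\HH|g$ one presumably only uses this for $|\delta|\leq 1$ (or the statement is using the weaker of the available bounds, or a normalization I should match to the appendix computation). I would reconcile the constant with the appendix version of $\ZZ$ and simply quote the resulting inequality, then pass to the outside region to get the matching two-sided statement; combining the inside bound $A\geq -|\delta|^{-1}|\HH|g$ from the exterior-ball comparison with the Hessian-of-$\ZZ$ bound gives both inequalities.

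Concretely, the steps in order: (i) fix $x_0$, choose the inner tangent ball of radius $|\delta|/|\HH(x_0)|$ guaranteed by rescaled-$\delta$-non-collapsing; (ii) compare second fundamental forms at the tangency point to get $A(x_0)\leq |\delta|^{-1}|\HH(x_0)|\,g$ — or, equivalently, differentiate $\ZZ(x_0,\cdot)$ twice at $x_0$ using $\ZZ\geq 0$ to read off the same one-sided bound; (iii) apply the same reasoning to the exterior tangent ball provided by rescaled-$-\delta$- (equivalently rescaled-$|\delta|$-) non-collapsing, obtaining $A(x_0)\geq -|\delta|^{-1}|\HH(x_0)|\,g$; (iv) since $x_0$ is arbitrary, conclude the pinching on all of $M_0$, and note the rescaled-mean-concave case is identical after flipping orientations. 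The main obstacle I anticipate is purely bookkeeping: getting the sign conventions and the precise constant right — in particular making sure the comparison with the tangent sphere produces the factor $|\delta|$ and $|\delta|^{-1}$ symmetrically, and checking that the orientation convention ($\bn$ outward, $\HH$ possibly negative in the concave case) does not flip an inequality. There is no analytic difficulty here once the pointwise ball-comparison lemma is in hand; it is really a corollary of the \emph{definition} of non-collapsing plus elementary convex geometry, which is presumably why it is stated as a corollary right after Theorem \ref{T:Shrinker delta non collapsed}.
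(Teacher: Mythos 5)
Your core strategy -- work pointwise at $x_0$, use the inscribed and exterior tangent balls of radius $|\delta|/|\HH(x_0)|$ and compare second fundamental forms at the tangency point (equivalently, expand $\ZZ(x_0,\cdot)\ge 0$ to second order on the diagonal via Lemma \ref{L:Equivalence of nonnegative Z tilde}) -- is the right one, and is evidently what the authors intend; the paper states this corollary without proof. But your execution has two concrete problems. First, a sign flip-flop: in your opening paragraph the inner ball gives the lower bound and the outer ball the upper bound, while in your step list it is the reverse; fix the convention once (with $h_{ij}=\langle\nabla_{\partial_i}\bn,\partial_j\rangle$ as in the appendix, a sphere with outward normal has $A=+\tfrac1r g$, so the inscribed ball gives $A\le |\delta|^{-1}|\HH|\,g$ and the exterior ball gives $A\ge -|\delta|^{-1}|\HH|\,g$; with the introduction's opposite convention the roles swap, but the two-sided conclusion is the same). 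Second, the factor $2$ in your $\ZZ$-Hessian route is wrong: for a unit-speed curve $\gamma(s)$ on $M_0$ through $x_0$ with $\gamma'(0)=e$ one has $\ZZ(x_0,\gamma(s))=\tfrac{s^2}{2}\bigl(\HH(x_0)-\delta\,h(e,e)\bigr)+O(s^3)$, so $\ZZ\ge0$ gives exactly $\delta\,h(e,e)\le\HH(x_0)$, i.e.\ the same constant $|\delta|^{-1}|\HH|$ as the ball comparison -- there is no sharper one-sided bound hiding in $\ZZ$, consistent with Lemma \ref{L:Equivalence of nonnegative Z tilde} being an equivalence.

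The genuine gap is that you never establish the upper bound as literally stated, $A\le |\delta|\,|\HH|\,g$, and no argument from the hypotheses can: for $|\delta|<1$ this is strictly stronger than $A\le|\delta|^{-1}|\HH|\,g$, and it is false in general -- any closed smooth rescaled mean convex hypersurface is rescaled-$|\delta|$-non-collapsed (from both sides) for every sufficiently small $|\delta|>0$, so the stated bound would force $A\le 0$, which already fails for a small round sphere. The constant in the upper bound must be read as $|\delta|^{-1}|\HH|$ (a misprint in the statement); as written it only follows, trivially, when $|\delta|\ge1$. So rather than restricting to $|\delta|\le 1$ or hunting for a sharper estimate, you should prove the symmetric two-sided bound $-|\delta|^{-1}|\HH|\,g\le A\le |\delta|^{-1}|\HH|\,g$ by your steps (i)--(iv) with the sign convention fixed, note that the concave case follows by exchanging the roles of inside and outside, and remark that this is the corrected form of the corollary.
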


\medskip

With non-collapsing of RMCF and its parabolic dilation sequence (see Remark \ref{R:RMCF blow up} and Remark \ref{R: Rescaling non collapsed}), we could follow Haslhofer-Kleiner's arguments in \cite{HK17} to study Theorem \ref{T:Smooth convergence}. The idea is very similar to the idea in Haslhofer-Kleiner's \cite{HK17}. Though the proof in \cite{HK17} is quite elegant and short, it is still too long to fit here. Instead, we will sketch the basic ideas here and refer the readers to \cite{HK17} for detailed proofs.

The key is a half-space convergence theorem (Theorem 2.1 in \cite{HK17}). A necessary modification is to set a sequence of parabolic rescalings of RMCF, which are rescaled-$|\delta|$-non-collapsed flow, with their $\sigma$ in (\ref{E:Evolution of RMCF after rescaling}) tending to $\infty$, and $y$ in (\ref{E:Evolution of RMCF after rescaling}) uniformly bounded. Hence, the evolution equation (\ref{E:Evolution of RMCF after rescaling}) is like the one of MCF as $\sigma\to\infty$ and $y$ uniformly bounded. One also needs to get the evolution of spheres under RMCF, which could be derived from the evolution of spheres under MCF and the $1-1$ correspondence between RMCF and MCF. In order to show further that this convergence is smooth, one also has to establish the local density estimates by one-sided minimization property with the gaussian area.

By this half-space convergence of RMCF type, we can argue as in \cite{HK17} that one could also get the corresponding curvature estimate. After which the arguments are the same as in Haslhofer-Kleiner's \cite{HK17}.

\medskip
So far, we have not specified whether the tangent flow in Theorem \ref{thm:main1} is the tangent flow of the MCF starting at $\widetilde\Sigma$, or the tangent flow of the RMCF starting at $\widetilde\Sigma$. At the end of this section, we prove that they are actually the same objects.

\begin{lemma}\label{Lemma;same tangent flows}
Let $\widetilde{M}_t$ be a RMCF, which has a singularity at the spacetime point $(y,T)$. Let $M_\tau$ be the corresponding MCF, which has a singularity at $(e^{-\frac{T}{2}}y,-e^{-T})$. Then one tangent flow of the RMCF at $(y,T)$ is the same as one tangent flow of the MCF at $(e^{-\frac{T}{2}}y,-e^{-T})$.
\end{lemma}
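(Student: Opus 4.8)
The plan is to exploit the explicit spacetime change of variables that converts an RMCF into an MCF, and to show that it maps parabolic rescalings at the RMCF singularity to parabolic rescalings at the corresponding MCF singularity, up to an error that is negligible in the rescaling limit. Concretely, recall from Section \ref{S:Preliminaries} that if $M_t$ is an RMCF for $t\in[0,\infty)$, then $\widetilde M_\tau=M_{-\log(-\tau)}/\sqrt{-\tau}$ is an MCF for $\tau\in[-1,0)$; inverting, $M_t=\sqrt{e^{-t}}\,\widetilde M_{-e^{-t}}$. If the RMCF has a singularity at $(y,T)$, then setting $\tau_0=-e^{-T}$ and $y_0=e^{-T/2}y=\sqrt{-\tau_0}\,y$, the MCF has a singularity at $(y_0,\tau_0)$, as stated.

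First I would write down the two blow-up families. The tangent flow of the RMCF at $(y,T)$ is the (subsequential, by Theorem \ref{T:self-shrinking tangent flow}) limit as $\sigma\to\infty$ of $F^{\sigma,(y,T)}(x,t)=\sigma\big(F(x,T+\sigma^{-2}t)-y\big)$, which by Remark \ref{R:RMCF blow up} solves the perturbed equation \eqref{E:Evolution of RMCF after rescaling} whose forcing terms carry factors $\sigma^{-2}$ and $\sigma^{-1}$ and hence vanish in the limit, so the limit is a (homothetically shrinking) MCF. The tangent flow of the MCF at $(y_0,\tau_0)$ is the limit as $\lambda\to\infty$ of $\widetilde F^{\lambda}(x,s)=\lambda\big(\widetilde F(x,\tau_0+\lambda^{-2}s)-y_0\big)$. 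Second, I would directly substitute the change of variables into one family to produce the other: starting from $F^{\sigma,(y,T)}$, use $F(x,t)=\sqrt{e^{-t}}\,\widetilde F(x,-e^{-t})$ with $t=T+\sigma^{-2}u$, Taylor-expand $e^{-(T+\sigma^{-2}u)}=e^{-T}(1-\sigma^{-2}u+O(\sigma^{-4}))$ and $\sqrt{e^{-(T+\sigma^{-2}u)}}=e^{-T/2}(1+O(\sigma^{-2}))$, and collect terms. One finds
\[
F^{\sigma,(y,T)}(x,u)=\sigma e^{-T/2}\Big(\widetilde F\big(x,\tau_0+\sigma^{-2}e^{-T}u+O(\sigma^{-4})\big)-y_0\Big)+O(\sigma^{-1}),
\]
which is exactly $\widetilde F^{\lambda}$ with $\lambda=\sigma e^{-T/2}$ and time variable $s=e^{-T}u$ (a fixed linear reparametrization of time, which only rescales the limiting shrinker's time coordinate and leaves the shrinker $\Sigma$ unchanged), plus an error that tends to zero locally smoothly. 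I would justify the local smooth control of the error using the smooth convergence in Theorem \ref{T:Smooth convergence} (applicable since the RMCF here is rescaled mean convex/concave), so that passing to the limit in the identity above identifies the two tangent flows.

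The main obstacle is the bookkeeping of the two effects the change of variables introduces relative to a genuine parabolic rescaling at a single point: (i) the time-dependent dilation factor $\sqrt{e^{-t}}$, which is why the spatial rescaling constant shifts from $\sigma$ to $\lambda=\sigma e^{-T/2}$ and why a lower-order $O(\sigma^{-1})$ translation error appears, and (ii) the nonlinear time substitution $\tau=-e^{-t}$, whose linearization at $t=T$ gives the clean factor $e^{-T}$ in front of the time variable but whose quadratic remainder must be shown to be $O(\sigma^{-4})$ in the rescaled time and hence irrelevant in the limit. Once one checks that these errors vanish in $C^\infty_{\mathrm{loc}}$ — using the uniform curvature and derivative bounds that come with rescaled-$|\delta|$-non-collapsing together with Theorem \ref{T:Smooth convergence} — the equality of tangent flows follows, since a homothetically shrinking MCF is determined by the varifold $\Sigma$ and the two constructions yield the same $\Sigma$ (only its time parametrization is affinely rescaled, which is absorbed into the standard normalization $\{\sqrt{-t}\Sigma\}$). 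I would close by remarking that the same argument shows multiplicity is preserved, so there is genuinely one tangent flow attached to the common singular point.
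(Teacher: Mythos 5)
Your argument is essentially the paper's own proof: both substitute the explicit spacetime correspondence between the RMCF and the MCF into the parabolic rescaling at the singular spacetime point, recognize the result as a parabolic rescaling of the MCF at $(e^{-T/2}y,-e^{-T})$ up to an affine reparametrization of time and errors that vanish as the scale tends to infinity, and then identify the two homothetically shrinking limits via Theorem \ref{T:self-shrinking tangent flow}. The only slip is that you have written the dilation backwards --- the correct relation is $M_t=\widetilde M_{-e^{-t}}/\sqrt{e^{-t}}$, not $\sqrt{e^{-t}}\,\widetilde M_{-e^{-t}}$ --- so the matching scale should be $\lambda=\sigma e^{T/2}$ rather than $\sigma e^{-T/2}$ (this is what makes the base point come out as $e^{-T/2}y$, consistent with the statement), and since both sequences of scales tend to infinity this does not affect the conclusion.
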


\begin{remark}
Note that the tangent flow at a singularity may not be unique. So in the statement of the lemma, we only prove that one tangent flow of the RMCF is the same as one tangent flow of the MCF. Nevertheless, in our later application, because the tangent flow is a self-shrinking cylinder, the tangent flow is unique by the work of Colding-Minicozzi \cite{CM15}. 
\end{remark}

\begin{proof}[Proof of Lemma \ref{Lemma;same tangent flows}]
Recall that if $\widetilde M_t$ is a RMCF defined for $t\in[0,\infty)$, then $M_\tau=\sqrt{-\tau}\widetilde M_{-\log(-\tau)}$ is a MCF defined for $\tau\in[-1,0)$. Then the time $-1$ slice of the blow up sequence of the RMCF is
\[\alpha_i (\widetilde M_{T-\alpha_i^{-2}}-y)=\alpha_i\left(\frac{M_{-e^{\alpha_i^{-2}-T}}}{e^{\alpha_i^{-2}/2-T/2}}-y\right)=\beta_i(M_{-e^{-T}+\beta_i^{-2}t_i}-e^{-\frac{T}{2}}y)+\alpha_i(e^{-\alpha_i^{-2} /2}-1)y.\]
Here $\beta_i=\alpha_i e^{-\alpha_i^{-2}/2+T/2}$ and $t_i=\alpha_i^{2}(e^{-\alpha_i^{-2}}-1)$. Hence it is also a blow up sequence of the corresponding MCF with a slightly variance in time and a translation in the space. As $\alpha_i\to\infty$,  we have $\beta_i\to \infty$, $t_i\to -1$ and $\alpha_i(e^{-\alpha_i^{-2} /2}-1)\to 0$. Moreover, the limit of the blow up sequence of RMCF and MCF are both homothetically shrinking MCFs. Therefore, the tangent flow of $\widetilde M_t$ at $(y,T)$ must be identified with the tangent flow of $M_\tau$ at $(e^{-\frac{T}{2}}y,-e^{-T})$.
\end{proof}

\section{Bifurcation}\label{S:Bifurcation}
In this section, we study the bifurcation phenomenon. Recall from Remark \ref{R:RMCF blow up} that the tangent flow of a RMCF $\widetilde M_t$ at spacetime point $(y,T)$ is the limit of the sequence
\[
\widetilde{M}^{\sigma,(y,T)}_t=\sigma (\widetilde{M}_{T+\sigma^{-2}t}-y),
\]
and $\widetilde{M}^{\sigma,(y,T)} _t$ satisfies the equation
\[
\pa_t x^{\sigma,(y,T)} = -\left(H - \frac{1}{2\sigma^2}\langle x^{\sigma,(y,T)},\bn \rangle-\frac{\langle y,\bn\rangle}{2\sigma}\right)\bn,
\]
where $x^{\sigma,(y,T)}$ is the position of $\widetilde{M}^{\sigma,(y,T)}_t$.

Now suppose $\widetilde M_t$ is a RMCF starting at a perturbed hypersurface given by an inward perturbation of a non-generic closed self-shrinker, with $k$-th non-trivial homology. Theorem \ref{T:finite time singularity} shows that $\widetilde M_t$ has a finite time singularity, and we assume it is $(y,T)$. Lemma \ref{L:Preservation of rescaled mean curvature} implies that $\widetilde M_t$ is rescaled mean convex for every time $t<T$, hence $\langle \pa_t x,\bn\rangle<0$. Since $\widetilde{M}^{\sigma,(y,T)}_t$ is a parabolic rescaling of $\widetilde M_t$, it also satisfies $\langle \pa_t x^{\sigma,(y,T)},\bn\rangle<0$. Therefore we obtain
\[
H\geq  \frac{1}{2\sigma^2}\langle x^{\sigma,(y,T)},\bn \rangle+\frac{\langle y,\bn\rangle}{2\sigma}.
\]

By passing to a limit, the regularity theory of Hershkovitz-White \cite{HW19} (see also the discussion in Section \ref{S:Non-collapsing}) shows that $\widetilde M^{\sigma,(y,T)}_t$ smoothly converges to a multiplicity $1$ self-shrinking mean curvature flow. Let $\sigma\to \infty$, we notice that $H\geq 0$ on the limit. Therefore the classification theorem by Huisken \cite{H90} (see also \cite{W00}, \cite{W03} and \cite{CM12}) implies that the tangent flow must be a self-shrinking (generalized) cylinder.

A similar argument holds for RMCF starting at a perturbed hypersurface given by an outward perturbation of a non-generic closed self-shrinker, with $k$-th non-trivial homology. In this case, a similar argument shows that after passing to a limit, the tangent flow satisfies 
$H\leq 0$. Therefore, by flipping the sign we obtain that the tangent flow must be a self-shrinking (generalized) cylinder.

In particular, the sign of $H$ of the blow-up sequence implies two different collapsing directions. This verifies the bifurcation phenomenon.

\appendix
\section{Computation of RMCF}\label{A:Computation of RMCF}

\subsection{Evolution Equations}

\indent We need to clarify some notations we would like to use in our calculations. Given a local coordinate, the second fundamental forms are
$h_{ij} = \langle\nabla_{\pa_i} \bn , \pa_j \rangle \ $, the mean curvature is
$H = h_{ij}g^{ji}\ $. We denote by
$T = \langle\nabla f , \bn\rangle\ $, and 
$ \HH = H + T\ $.
Here $f$ is a smooth function on $\mb{R}^{n+1}$. Our rescaled MCF (RMCF) is defined by:
    $$F: M^n \times [0,T) \to \mb{R}^{n+1}\ ,$$
with
    \begin{equation}\label{D:Definition of RMCF}
        \frac{d F}{d t}(x,t) = - \HH_x \bn_x \ .
    \end{equation}
The subscription shows at which point the geometric quantities are defined. Also, we have used the abbreviation that $\pa_i = \frac{dF}{dx_i}$ and $\pa_t = \frac{dF}{dt}$. $\nabla$ is the usual
Levi-Civita connection of $\mb{R}^{n+1}$ while $\nabla^M$ is the induced connection of $M$ at time $t$. $\bn$ is the unit outer normal vector.

\begin{lemma}[Evolution of the intrinsic geometry]\label{L:Evolution of the intrinsic geometry}
    \begin{equation}
        \pa_t g_{ij} = -2\HH h_{ij} \ ,
    \end{equation}
    \begin{equation}
        \pa_t \mr{det} g = - 2 \HH  H \  \mr{det}g \ . 
    \end{equation}
\end{lemma}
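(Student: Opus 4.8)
The plan is to differentiate the standard formulas for the first fundamental form directly, using the flow equation $\pa_t F = -\HH\bn$ from \eqref{D:Definition of RMCF}. First I would compute $\pa_t g_{ij} = \pa_t\langle\pa_i F,\pa_j F\rangle = \langle\pa_i\pa_t F,\pa_j F\rangle + \langle\pa_i F,\pa_j\pa_t F\rangle$, commuting the coordinate derivatives $\pa_i$ with the time derivative $\pa_t$ (legitimate since $F$ is smooth and $x_i,t$ are independent parameters). Substituting $\pa_t F = -\HH\bn$ gives
\[
\pa_t g_{ij} = -\langle\pa_i(\HH\bn),\pa_j F\rangle - \langle\pa_i F,\pa_j(\HH\bn)\rangle.
\]
Expanding $\pa_i(\HH\bn) = (\pa_i\HH)\bn + \HH\,\pa_i\bn$ and using $\langle\bn,\pa_j F\rangle = 0$, the terms involving $\pa_i\HH$ and $\pa_j\HH$ drop out, leaving $-\HH\langle\pa_i\bn,\pa_j F\rangle - \HH\langle\pa_i F,\pa_j\bn\rangle$. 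By the definition $h_{ij} = \langle\nabla_{\pa_i}\bn,\pa_j\rangle$ and the symmetry $h_{ij} = h_{ji}$, this is exactly $-2\HH h_{ij}$, which proves the first identity.

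For the second identity I would use Jacobi's formula for the derivative of a determinant: $\pa_t(\det g) = (\det g)\,g^{ij}\pa_t g_{ij}$. Inserting the first identity yields $\pa_t(\det g) = (\det g)\,g^{ij}(-2\HH h_{ij}) = -2\HH(\det g)\,g^{ij}h_{ij}$. Since $H = h_{ij}g^{ji} = g^{ij}h_{ij}$ by the definition of mean curvature given in the preamble (using symmetry of both $g^{ij}$ and $h_{ij}$), this equals $-2\HH H(\det g)$, as claimed.

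This computation is entirely routine — essentially identical to Huisken's classical computation in \cite{H84}, with $\HH = H+T$ playing the role that $H$ plays for ordinary MCF — so there is no real obstacle. The only point requiring a moment's care is the cancellation of the $\pa_i\HH$ and $\pa_j\HH$ terms, which relies on $\bn$ being orthogonal to the tangent space $T_pM$ spanned by the $\pa_j F$; and noting that the formula is insensitive to the precise form of $\HH$ (it would hold for any scalar speed function), so the additional term $T = \langle\nabla f,\bn\rangle$ in $\HH$ causes no difficulty here. One should also record that the evolution of the volume form $d\mu = \sqrt{\det g}\,dx$ follows immediately: $\pa_t d\mu = -\HH H\,d\mu$.
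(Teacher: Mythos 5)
Your computation is correct and is exactly the standard Huisken-style calculation that the paper invokes by reference (the paper gives no written proof, simply citing \cite{H84} and noting the calculations are "very similar"); the sign conventions match the appendix's definitions $h_{ij}=\langle\nabla_{\pa_i}\bn,\pa_j\rangle$ and $H=h_{ij}g^{ji}$, and your observation that the argument only uses that the speed is a scalar multiple of $\bn$ is precisely why the extra term $T$ in $\HH$ is harmless. Nothing further is needed.
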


\begin{lemma}[Evolution of the extrinsic geometry]\label{L:Evolution of the extrinsic geometry}
    \begin{equation}
        \pa_t \bn = \nabla^M \HH\ ,
    \end{equation}
    \begin{equation}
        \pa_t h_{ij} = (\Hess_M  \HH) _{ij} - \HH (h^2)_{ij} \ ,
    \end{equation}
where $(h^2)_{ij} = h_{ip} g^{pq} h_{qj}$, and $\Hess_M$ is the Hessian on $M$.
    \begin{equation}
        \pa_t H = \Delta_M \HH + |h|^2 \HH \ .
    \end{equation}
    
\end{lemma}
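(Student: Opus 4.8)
The plan is to obtain all three identities as the standard first-variation formulas for a purely normal deformation: the only input is that the flow has velocity $\pa_t F = -\HH\bn$, together with Lemma \ref{L:Evolution of the intrinsic geometry}, so nothing about the particular form $\HH = H+T$ of the speed will be used -- which is also why the same computation applies verbatim to the parabolically rescaled flows $F^{\sigma,(y,T)}$ of Remark \ref{R:RMCF blow up}. Throughout I will commute space and time derivatives, $\pa_t\pa_i F = \pa_i\pa_t F = \pa_i(-\HH\bn)$, and differentiate in $t$ the three defining relations $\langle\bn,\bn\rangle\equiv 1$, $\langle\bn,\pa_i F\rangle\equiv 0$, and $h_{ij} = -\langle\bn,\pa_i\pa_j F\rangle$ (the last being equivalent to the convention $h_{ij} = \langle\nabla_{\pa_i}\bn,\pa_j\rangle$ adopted above), repeatedly using the Gauss decomposition $\pa_i\pa_j F = \Gamma^k_{ij}\pa_k F - h_{ij}\bn$ and the Weingarten relation $\pa_i\bn = g^{kl}h_{ik}\pa_l F$.

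First I would compute $\pa_t\bn$. Differentiating $\langle\bn,\bn\rangle\equiv 1$ shows $\pa_t\bn$ is tangential, and differentiating $\langle\bn,\pa_i F\rangle\equiv 0$ while substituting $\pa_t\pa_i F = -(\pa_i\HH)\bn - \HH\pa_i\bn$ yields $\langle\pa_t\bn,\pa_i F\rangle = \pa_i\HH$ (the $\HH\langle\bn,\pa_i\bn\rangle$ term vanishing), hence $\pa_t\bn = g^{ij}(\pa_i\HH)\pa_j F = \nabla^M\HH$.

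Next I would compute $\pa_t h_{ij}$ by differentiating $h_{ij} = -\langle\bn,\pa_i\pa_j F\rangle$. The first resulting term, $-\langle\pa_t\bn,\pa_i\pa_j F\rangle$, becomes $-\Gamma^k_{ij}\pa_k\HH$ after inserting $\pa_t\bn = \nabla^M\HH$ and the Gauss decomposition. The second term, $-\langle\bn,\pa_i\pa_j(-\HH\bn)\rangle$, is handled by expanding $\pa_i\pa_j(\HH\bn)$: the terms first order in derivatives of $\HH$ pair to zero with $\bn$, and the remaining $\HH\langle\bn,\pa_i\pa_j\bn\rangle$ equals $-\HH(h^2)_{ij}$ since $\langle\bn,\pa_i\pa_j\bn\rangle = -\langle\pa_i\bn,\pa_j\bn\rangle = -(h^2)_{ij}$ by Weingarten. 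Collecting gives $\pa_t h_{ij} = \pa_i\pa_j\HH - \Gamma^k_{ij}\pa_k\HH - \HH(h^2)_{ij} = (\Hess_M\HH)_{ij} - \HH(h^2)_{ij}$. The formula for $\pa_t H$ then drops out of $H = g^{ij}h_{ij}$ together with $\pa_t g^{ij} = 2\HH h^{ij}$ (which follows from $\pa_t g_{ij} = -2\HH h_{ij}$ in Lemma \ref{L:Evolution of the intrinsic geometry}): one gets $\pa_t H = 2\HH|h|^2 + \Delta_M\HH - \HH|h|^2 = \Delta_M\HH + |h|^2\HH$.

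I do not expect a genuine obstacle here; the argument is pure bookkeeping, and the step that most needs care is simply keeping the sign conventions straight -- that $h_{ij} = \langle\nabla_{\pa_i}\bn,\pa_j\rangle$ (equivalently $-\langle\bn,\pa_i\pa_j F\rangle$), that $H = g^{ij}h_{ij}$, and that $\Hess_M$ and $\Delta_M$ refer to the time-dependent induced metric. These identities are precisely \cite[Lemma 3.1]{CIMW13} specialized to the present flow, and, as noted above, they hold unchanged for the parabolic rescalings of RMCF used elsewhere in the paper.
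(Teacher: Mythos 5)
Your computation is correct: the sign conventions match the paper's ($h_{ij}=\langle\nabla_{\pa_i}\bn,\pa_j\rangle=-\langle\bn,\pa_i\pa_jF\rangle$, $H=g^{ij}h_{ij}$), each of the three identities follows as you derive it, and your observation that only the normal-speed form $\pa_tF=-\HH\bn$ is used is exactly why the paper can apply the same formulas to the rescaled flows. The paper omits the proof and defers to Huisken's classical first-variation computations, which is precisely the argument you have written out, so your approach is the intended one.
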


\begin{lemma}[Evolution of T]\label{L:Evolution of T}
    \begin{equation}
        \pa_t T = - \HH \nabla^2 f (\bn , \bn) + \langle\nabla f , \nabla^M \HH\rangle \ .
    \end{equation}
    
\end{lemma}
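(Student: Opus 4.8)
The plan is to differentiate the defining expression $T = \langle \nabla f, \bn\rangle$ directly along the flow. Here $\nabla f$ is the ambient gradient of $f$ on $\R^{n+1}$ restricted to the moving hypersurface, i.e. the map $(x,t)\mapsto \nabla f\big(F(x,t)\big)$, and $\langle\cdot,\cdot\rangle$ denotes the Euclidean inner product. By the product rule,
\[
\pa_t T = \big\langle \pa_t\big(\nabla f\circ F\big),\, \bn\big\rangle + \big\langle \nabla f,\, \pa_t \bn\big\rangle .
\]
So the computation splits into identifying these two terms.

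For the second term I would invoke the evolution of the unit normal already recorded in Lemma~\ref{L:Evolution of the extrinsic geometry}, namely $\pa_t \bn = \nabla^M \HH$ (in particular $\pa_t\bn$ is tangential, consistent with $|\bn|\equiv 1$), which gives $\langle \nabla f, \pa_t\bn\rangle = \langle \nabla f, \nabla^M \HH\rangle$, the second term of the claimed identity. For the first term, since the ambient connection on $\R^{n+1}$ is flat, the time derivative of $\nabla f\circ F$ is just the ambient covariant derivative of $\nabla f$ in the direction of the velocity $\pa_t F = -\HH\,\bn$; that is, $\pa_t(\nabla f\circ F) = \nabla_{-\HH\bn}\nabla f = -\HH\,\nabla_{\bn}\nabla f$. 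Pairing with $\bn$ and using $\langle \nabla_X \nabla f, Y\rangle = \nabla^2 f(X,Y)$ yields $\langle \pa_t(\nabla f\circ F), \bn\rangle = -\HH\,\nabla^2 f(\bn,\bn)$. Adding the two contributions gives exactly
\[
\pa_t T = -\HH\,\nabla^2 f(\bn,\bn) + \langle \nabla f, \nabla^M \HH\rangle .
\]

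I do not expect a serious obstacle here; the only point requiring care is bookkeeping: distinguishing the intrinsic gradient $\nabla^M\HH$ on $M_t$ from the ambient gradient $\nabla f$, and remembering that differentiating the latter along the flow produces the ambient Hessian $\nabla^2 f$ contracted with the flow velocity, not an intrinsic quantity. As a cross-check one could instead work in a local frame $\{\pa_i\}$, writing $\nabla f = (\nabla f)^{\top} + T\,\bn$ and differentiating componentwise while using Lemma~\ref{L:Evolution of the intrinsic geometry} for $\pa_t g_{ij}$, but the intrinsic/extrinsic split above is cleaner and avoids Christoffel-symbol bookkeeping.
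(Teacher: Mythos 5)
Your computation is correct: the paper omits the proof of this lemma, describing it as a ``fundamental calculation'' in the spirit of Huisken's evolution equations, and your direct differentiation of $T=\langle\nabla f,\bn\rangle$ --- using $\pa_t\bn=\nabla^M\HH$ for one term and the chain rule $\pa_t(\nabla f\circ F)=\nabla_{-\HH\bn}\nabla f$ for the other --- is exactly that standard argument. No gaps.
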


All these three lemmas are fundamental calculations. One could turn to \cite{H84} for the evolution equations of MCF, and the calculations are very similar.

\subsection{{Main Calculation}}
\indent Most of our calculation would follow \cite{A12} with the evolution equations of RMCF above. 

In order to prove Theorem \ref{T:Shrinker delta non collapsed}, by Lemma \ref{L:Equivalence of nonnegative Z tilde},  we consider the following funtion:
    \begin{align*}
        \ZZ(x,y,t) = \frac{\HH(x,t)}{2} \| F(y,t) - F(x,t) \|^2 + \delta \langle F(y,t) - F(x,t) , \bn(x,t) \rangle.
    \end{align*}
Then, the theorem is equivalent to prove that this function is non-negative everywhere provided that it is non-negative on $M \times M \times \{0\}$.
For our convenience, we would use same notations as in \cite{A12}. We would still apply maximum principle to show this. Let $\HH _x$ be the rescaled mean curvature and $\bn_x$ the outward unit normal vector at $(x,t)$, and let
$d = |F(y,t) - F(x,t)| $ and $\om = \frac{F(y,t) - F(x,t)}{d} $, and $\pa_i ^x = \frac{\pa F}{\pa x^i}$.
\\
\indent We compute first and second derivatives of $\ZZ$, with respect to some choices of local normal coordinates $\{x^i\}$ near $x$ and $\{y^i\}$ near $y$. In our following calculation, $T$ is in a general form, not just in the situation that $f = - \frac{|x|^2}{4}$. We only
use $f = - \frac{|x|^2}{4}$ in the final proof of Theorem \ref{T:Shrinker delta non collapsed}. Moreover, the same result of Theorem \ref{T:Shrinker delta non collapsed} also holds if we rescale $f$, see Remark \ref{R: Rescaling non collapsed} .
\begin{lemma}[First derivatives of $\ZZ$]\label{L:First derivatives of Z tilde}
    \begin{equation}\label{E:Derivative of Z tilde wrpt y}
        \frac{\pa \ZZ}{\pa y^i} = d \HH_x \langle \om , \pa_i ^y \rangle + \delta \langle \pa_i ^y , \bn_x \rangle,
    \end{equation}
    \begin{equation}
        \frac{\pa \ZZ}{\pa x^i} = -d \HH _x \langle \om , \pa_i ^x \rangle + \frac{d^2}{2}\nabla_i \HH_x + \delta d h_{iq} ^x g_{x} ^{qp} \langle \om, \pa_p ^x \rangle ,
    \end{equation}
    \begin{equation}
        \begin{split}
            \frac{\pa \ZZ }{ \pa t} = & d \HH_x \langle \om, - \HH _y \bn_y + \HH_x \bn_x \rangle + \frac{d^2}{2} \big[\Delta_M \HH_x + \HH_x |h^x|^2  + \pa_t(T_x) \big] 
                              \\    &+ \delta \langle - \HH_y \bn_y + \HH_x \bn_x , \bn_x \rangle + \delta d \langle \om , \nabla^M \HH_x \rangle ,
        \end{split}
    \end{equation}
    where $h_{ij} ^x = \langle \nabla_{\pa_i ^x} \bn_x , \pa_j ^x \rangle$ is the second fundamental form at $(x,t)$.
\end{lemma}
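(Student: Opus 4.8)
The plan is to obtain all three formulas by direct differentiation of
$\ZZ = \frac{\HH(x,t)}{2}\|F(y,t)-F(x,t)\|^2 + \delta\langle F(y,t)-F(x,t),\bn(x,t)\rangle = \frac{\HH_x}{2}d^2 + \delta d\langle\om,\bn_x\rangle$, working in normal coordinates $\{x^i\}$ centred at $x$ and $\{y^i\}$ centred at $y$, so that at the base points the coordinate fields are orthonormal and the Christoffel symbols of the induced metric vanish; in particular $\pa_{x^i}$ acting on a scalar function on $M$ agrees with $\nabla_i$ at $x$, which is all that is needed since every identity asserted is pointwise in $(x,y,t)$. Three facts will be used repeatedly: the Weingarten relation $\nabla_{\pa_i^x}\bn_x = h_{iq}^x g_x^{qp}\pa_p^x$; the orthogonality $\langle\pa_i^x,\bn_x\rangle=0$; and, for the time derivative, the RMCF equation $\pa_t F(z,t)=-\HH_z\bn_z$ together with the evolution equations of Lemmas \ref{L:Evolution of the intrinsic geometry}, \ref{L:Evolution of the extrinsic geometry} and \ref{L:Evolution of T}, notably $\pa_t\bn_x = \nabla^M\HH_x$ and, from $\pa_t H = \Delta_M\HH + |h|^2\HH$ together with $\HH=H+T$, the identity $\pa_t\HH_x = \Delta_M\HH_x + \HH_x|h^x|^2 + \pa_t(T_x)$.

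For the $y$-derivative only the factor $F(y,t)$ depends on $y$: one has $\pa_{y^i}\|F(y)-F(x)\|^2 = 2\langle F(y)-F(x),\pa_i^y\rangle = 2d\langle\om,\pa_i^y\rangle$ and $\pa_{y^i}\langle F(y)-F(x),\bn_x\rangle = \langle\pa_i^y,\bn_x\rangle$, which gives (\ref{E:Derivative of Z tilde wrpt y}) at once. For the $x$-derivative all of $\HH_x$, $F(x,t)$ and $\bn_x$ vary: the product-rule term differentiating $\HH_x$ contributes $\frac{d^2}{2}\nabla_i\HH_x$; differentiating $\|F(y)-F(x)\|^2$ yields $-2d\langle\om,\pa_i^x\rangle$, hence the term $-d\HH_x\langle\om,\pa_i^x\rangle$; and differentiating $\langle F(y)-F(x),\bn_x\rangle$ produces $-\langle\pa_i^x,\bn_x\rangle + \langle F(y)-F(x),\nabla_{\pa_i^x}\bn_x\rangle$, where the first term vanishes by orthogonality and the second equals $d\,h_{iq}^x g_x^{qp}\langle\om,\pa_p^x\rangle$ by Weingarten. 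Summing these reproduces the stated expression for $\pa\ZZ/\pa x^i$.

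For $\pa_t\ZZ$ I differentiate the three factors and substitute $\pa_t F(y,t)-\pa_t F(x,t) = -\HH_y\bn_y + \HH_x\bn_x$. The term from $\pa_t\HH_x$ gives $\frac{d^2}{2}\big(\Delta_M\HH_x + \HH_x|h^x|^2 + \pa_t(T_x)\big)$; the term from $\pa_t\|F(y)-F(x)\|^2 = 2\langle F(y)-F(x),-\HH_y\bn_y+\HH_x\bn_x\rangle$ contributes $d\HH_x\langle\om,-\HH_y\bn_y+\HH_x\bn_x\rangle$; and the term from $\pa_t\langle F(y)-F(x),\bn_x\rangle = \langle -\HH_y\bn_y+\HH_x\bn_x,\bn_x\rangle + \langle F(y)-F(x),\pa_t\bn_x\rangle$ equals $\langle -\HH_y\bn_y+\HH_x\bn_x,\bn_x\rangle + d\langle\om,\nabla^M\HH_x\rangle$ after inserting $\pa_t\bn_x=\nabla^M\HH_x$. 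Collecting everything yields the displayed formula for $\pa\ZZ/\pa t$.

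There is no substantive difficulty in this argument — it is a bookkeeping computation parallel to Andrews' in \cite{A12} — and the only points demanding care are keeping consistent track of at which point ($x$ or $y$, and at which time) each geometric quantity is evaluated, and using the identification of $\pa_{x^i}$ with the covariant derivative only at the centre of the normal coordinate chart, which suffices precisely because the asserted identities are pointwise.
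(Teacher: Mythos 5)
Your proposal is correct and matches the paper's proof, which likewise obtains all three identities by direct differentiation in normal coordinates and, for the time derivative, substitutes the evolution equations for $\pa_t H$ and $\pa_t \bn_x$ from Lemma \ref{L:Evolution of the extrinsic geometry}. The bookkeeping (Weingarten relation, orthogonality $\langle\pa_i^x,\bn_x\rangle=0$, and $\pa_t\HH_x=\Delta_M\HH_x+|h^x|^2\HH_x+\pa_t T_x$) is carried out correctly.
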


\begin{proof}
    The first two equations are by direct calculation, while in the third one, we use Lemma \ref{L:Evolution of the extrinsic geometry} to express $\pa_t H$ and $\pa_t \bn_x$
\end{proof}
We then have the following relation among $\bn_x$, $\bn_y$ and $\om$, and the proof is the same to the one in \cite{A12}.
\begin{lemma}\label{L:General relation among normal vectors}
    \begin{equation}
        \bn_x + \frac{d \HH_x}{\delta} \om - \frac{1}{\delta} \frac{\pa \ZZ}{\pa y^q}g_y ^{qp} \pa_p ^y = \pm \bn_y \sqrt{1 + \frac{2 \HH_x}{\delta^2} \ZZ - \frac{1}{\delta^2} | \nabla^M _y \ZZ |^2} .
    \end{equation}
\end{lemma}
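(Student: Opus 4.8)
The plan is to decompose the $\R^{n+1}$-vector $\delta\bn_x + d\HH_x\om$ into its components tangent and normal to $M$ at the point $y$, and to read off the tangential part directly from the formula for $\partial\ZZ/\partial y^i$ in Lemma \ref{L:First derivatives of Z tilde}. Indeed, \eqref{E:Derivative of Z tilde wrpt y} says exactly that
\[
\big\langle \delta\bn_x + d\HH_x\om,\ \pa_i^y\big\rangle = \frac{\pa\ZZ}{\pa y^i},
\]
so, in the normal coordinates $\{y^i\}$ near $y$, the orthogonal projection of $\delta\bn_x + d\HH_x\om$ onto $T_yM$ is $\frac{\pa\ZZ}{\pa y^q}g_y^{qp}\pa_p^y = \nabla^M_y\ZZ$. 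Hence the vector
\[
V := \delta\bn_x + d\HH_x\om - \frac{\pa\ZZ}{\pa y^q}g_y^{qp}\pa_p^y
\]
is orthogonal to $T_yM$, so it is a scalar multiple of $\bn_y$; write $V = \pm|V|\,\bn_y$.

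It remains to compute $|V|$. Since $V$ is the normal component of $W := \delta\bn_x + d\HH_x\om$ while $\nabla^M_y\ZZ$ is its tangential component, we have $|V|^2 = |W|^2 - |\nabla^M_y\ZZ|^2$. Expanding, $|W|^2 = \delta^2 + 2\delta d\HH_x\langle\bn_x,\om\rangle + d^2\HH_x^2$, and from the definition $\ZZ = \tfrac{\HH_x}{2}d^2 + \delta d\langle\om,\bn_x\rangle$ one gets $2\delta d\HH_x\langle\bn_x,\om\rangle = 2\HH_x\ZZ - \HH_x^2 d^2$. Substituting, the $d^2\HH_x^2$ terms cancel, leaving $|W|^2 = \delta^2 + 2\HH_x\ZZ$, so
\[
|V|^2 = \delta^2 + 2\HH_x\ZZ - |\nabla^M_y\ZZ|^2 = \delta^2\Big(1 + \frac{2\HH_x}{\delta^2}\ZZ - \frac{1}{\delta^2}|\nabla^M_y\ZZ|^2\Big).
\]
Therefore $V = \pm\,\delta\,\bn_y\sqrt{\,1 + \tfrac{2\HH_x}{\delta^2}\ZZ - \tfrac{1}{\delta^2}|\nabla^M_y\ZZ|^2\,}$, where the ambiguous sign $\pm$ also absorbs the sign of $\delta$, and dividing by $\delta$ yields the claimed identity. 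Note that the quantity under the square root is automatically nonnegative, since it equals $|V|^2/\delta^2$.

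This step is purely algebraic once Lemma \ref{L:First derivatives of Z tilde} is in place, and is identical in structure to the corresponding computation in \cite{A12}; I do not expect any genuine obstacle. The only point requiring a little care is the bookkeeping of signs when $\delta<0$, but this is harmless because it is swallowed by the $\pm$ in front of $\bn_y$.
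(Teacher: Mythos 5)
Your proposal is correct and follows essentially the same route as the paper: both use \eqref{E:Derivative of Z tilde wrpt y} to show the vector is orthogonal to $T_yM$ and then compute its length using the definition of $\ZZ$ to replace $\langle\bn_x,\om\rangle$. Your use of Pythagoras ($|V|^2=|W|^2-|\nabla^M_y\ZZ|^2$) is a minor streamlining of the paper's direct expansion of the squared norm, not a different argument.
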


\begin{proof}
    We first show that the left hand side is normal to each $\pa_i ^y$:
        $$\langle \pa_i ^y, \bn_x + \frac{d \HH_x}{\delta} \om - \frac{1}{\delta} \frac{\pa \ZZ}{\pa y^q}g_y ^{qp} \pa_p ^y \rangle = \langle \pa_i ^y , \bn_x + \frac{d \HH_x }{\delta} \om \rangle - \frac{1}{\delta}\frac{\pa \ZZ}{\pa y^i} = 0 \ .$$
    The last equality is from (\ref{E:Derivative of Z tilde wrpt y}) of Lemma \ref{L:First derivatives of Z tilde}. Hence, the left hand side is parallel to $\bn_y$. Then,
        \begin{equation*}
                        \| \bn_x + \frac{d \HH_x}{\delta} \om - \frac{1}{\delta} \frac{\pa \ZZ}{\pa y^q}g_y ^{qp} \pa_p ^y \|^2 
                = 1 + \frac{2 \HH_x}{\delta^2} \ZZ - \frac{1}{\delta^2} | \nabla^M _y \ZZ |^2 \ ,
        \end{equation*}
    by the same calculation in \cite{A12}.
\end{proof}

\begin{remark}
    Note that we use $\pm$ here as we only calculated the length of the left-hand side of Lemma \ref{L:General relation among normal vectors}. But no matter $+$ or $-$ here, at the final step, each '$-$' term would multiply with another '$-$' term. Hence, they would cancel each other. It loses nothing if one takes $+$ or $-$ in Lemma \ref{L:General relation among normal vectors}.
\end{remark}

\begin{lemma}[Second derivatives of $\ZZ$ at $(x,y)$]
    \begin{equation}
    \left\{
    \begin{aligned}
        &\frac{\pa^2 \ZZ}{\pa y^i \pa y^j} = \HH_x \langle \pa_i ^y, \pa_j ^y \rangle - d\HH_x h_{ij} ^y \langle \om , \bn_y \rangle - \delta h_{ij} ^y \langle \bn_y, \bn_x \rangle , 
        \\
        &\frac{\pa^2 \ZZ}{\pa y^i \pa x^j} = - \HH_x \langle \pa_i ^y , \pa_j ^x \rangle + d \langle\om , \pa_i ^y \rangle \nabla_j \HH_x + \delta h_{jq} ^x g_{x} ^{qp} \langle \pa_i ^y, \pa_p ^x \rangle , 
        \\
            &\frac{\pa^2 \ZZ}{\pa x^j \pa x^i} = \HH_x \langle \pa_j ^x , \pa_i ^x \rangle -d\langle \om , \pa_i ^x \rangle \nabla_j \HH_x + d \HH_x h_{ij} ^x  \langle \om , \bn_x \rangle - d \langle \om , \pa_j ^x \rangle \nabla_i \HH_x 
            \\    & \qquad\qquad
            + \frac{d^2}{2} \nabla_j \nabla_i \HH_x
            + \delta d (\nabla_j h_{iq} ^x) g_x ^{qp} \langle \om , \pa_p ^x \rangle - \delta h_{ij} ^x - \delta d h_{iq} ^x g_x ^{qp} h_{pj} ^x \langle \om , \bn_x \rangle \end{aligned}\right.
    \end{equation}
\end{lemma}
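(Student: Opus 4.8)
The plan is to derive all three identities by differentiating the first--order formulas of Lemma~\ref{L:First derivatives of Z tilde} once more, in the same spirit as Andrews' computation in \cite{A12}; note that, unlike the $\pa_t \ZZ$ formula, this is a purely spatial computation and uses no evolution equations. First I would rewrite the first derivatives so that the auxiliary quantities $d$ and $\om$ do not appear separately: since $d\,\om = F(y,t)-F(x,t)$ and $d^{2}=\|F(y,t)-F(x,t)\|^{2}$, Lemma~\ref{L:First derivatives of Z tilde} becomes
\begin{align*}
\frac{\pa \ZZ}{\pa y^{i}} &= \HH_{x}\,\langle F(y,t)-F(x,t),\pa_{i}^{y}\rangle + \delta\,\langle\pa_{i}^{y},\bn_{x}\rangle,\\
\frac{\pa \ZZ}{\pa x^{i}} &= -\HH_{x}\,\langle F(y,t)-F(x,t),\pa_{i}^{x}\rangle + \tfrac{1}{2}\|F(y,t)-F(x,t)\|^{2}\,\nabla_{i}\HH_{x} + \delta\,h^{x}_{iq}g_{x}^{qp}\langle F(y,t)-F(x,t),\pa_{p}^{x}\rangle.
\end{align*}
In this form each summand is a smooth function of $F$, $\bn$, $\HH$ and $h$, so the second derivatives are produced by the product rule, with $d$ and $\om$ re-substituted at the end.

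Next I would fix normal coordinates $\{x^{i}\}$ about $x$ and $\{y^{i}\}$ about $y$, so that at the base points the only geometric inputs are the Gauss--Weingarten relations $\pa_{i}\pa_{j}F=-h_{ij}\bn$ (the tangential part $\Gamma_{ij}^{k}\pa_{k}F$ vanishes in normal coordinates, and the sign is forced by the convention $h_{ij}=\langle\nabla_{\pa_{i}}\bn,\pa_{j}\rangle$ of the Appendix) together with $\nabla_{\pa_{j}}\bn=h_{jq}g^{qp}\pa_{p}$. Since $\HH$, $\nabla\HH$ and $h$ are tensorial on $M$, their coordinate partial derivatives at the base point coincide with covariant derivatives, and $\pa_{j}g^{qp}=0$ there. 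For $\pa^{2}\ZZ/\pa y^{i}\pa y^{j}$ I differentiate the first line in $y^{j}$: hitting the factor $F(y,t)$ gives $\HH_{x}\langle\pa_{i}^{y},\pa_{j}^{y}\rangle$, while hitting $\pa_{i}^{y}$ in the two terms gives $-\HH_{x}h^{y}_{ij}\langle F(y,t)-F(x,t),\bn_{y}\rangle-\delta h^{y}_{ij}\langle\bn_{y},\bn_{x}\rangle$; re-inserting $d,\om$ yields the first identity. For $\pa^{2}\ZZ/\pa y^{i}\pa x^{j}$ I differentiate the same line in $x^{j}$, so that $\HH_{x}\mapsto\nabla_{j}\HH_{x}$, $F(x,t)\mapsto\pa^{x}_{j}F$ and $\bn_{x}\mapsto h^{x}_{jq}g_{x}^{qp}\pa_{p}^{x}$, which directly gives the mixed identity.

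For $\pa^{2}\ZZ/\pa x^{j}\pa x^{i}$ I differentiate the three summands of $\pa\ZZ/\pa x^{i}$ in $x^{j}$: the first produces $\HH_{x}\langle\pa_{i}^{x},\pa_{j}^{x}\rangle + d\HH_{x}h^{x}_{ij}\langle\om,\bn_{x}\rangle - d\langle\om,\pa_{i}^{x}\rangle\nabla_{j}\HH_{x}$; the middle one produces $-d\langle\om,\pa_{j}^{x}\rangle\nabla_{i}\HH_{x} + \tfrac{d^{2}}{2}\nabla_{j}\nabla_{i}\HH_{x}$; and the last, using $\pa_{j}h^{x}_{iq}=\nabla_{j}h^{x}_{iq}$ and the contraction $h^{x}_{iq}g_{x}^{qp}g^{x}_{jp}=h^{x}_{ij}$, produces $\delta d(\nabla_{j}h^{x}_{iq})g_{x}^{qp}\langle\om,\pa_{p}^{x}\rangle - \delta h^{x}_{ij} - \delta d\, h^{x}_{iq}g_{x}^{qp}h^{x}_{pj}\langle\om,\bn_{x}\rangle$. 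Summing these gives the third identity.

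I do not expect any genuine obstacle: the content is entirely bookkeeping. The two points that require care are (i) keeping the sign convention $h_{ij}=-\langle\pa_{i}\pa_{j}F,\bn\rangle$ consistent throughout, which is what produces the minus signs multiplying $\HH_{x}h_{ij}$ and the $-\delta h^{x}_{ij}$ term, and (ii) remembering that every formula is an identity \emph{at the base point}, where Christoffel symbols and the first derivatives of the metric vanish, so that $\pa_j$ applied to $\HH$ or to $h$ may legitimately be replaced by $\nabla_j$. With those conventions pinned down the computation is the direct analogue of the corresponding step in \cite{A12}, now carried out with the RMCF quantity $\HH=H+T$ in place of $H$.
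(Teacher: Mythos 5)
Your proposal is correct and is essentially the paper's own proof: the paper likewise obtains all three identities by differentiating the formulas of Lemma \ref{L:First derivatives of Z tilde} directly, using that in the chosen normal coordinates the terms $\nabla_{\pa_i^x}\pa_j^x$ and $\nabla_{\pa_i^y}\pa_j^y$ vanish at the base points. Your write-up merely makes explicit the Gauss--Weingarten bookkeeping (with the sign convention $h_{ij}=\langle\nabla_{\pa_i}\bn,\pa_j\rangle$, so $\pa_i\pa_j F=-h_{ij}\bn$ at the base point) that the paper leaves implicit.
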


\begin{proof}
    The proof is also by directly differentiating equations in Lemma \ref{L:First derivatives of Z tilde}. As we chose local normal coordinates near $x$ and $y$, terms like $\nabla _{\pa_i ^x}^M \pa_j ^x$ and $\nabla _{\pa_i ^y}^M \pa_j ^y$ all vanish at the point $(x,y)$ in the calculation
\end{proof}
Also, we can choose local normal coordinates at first so that $\{\pa_i ^x\}$ is orthonormal at $x$ and $\{\pa_i ^y \}$ is orthonormal at $y$, and $\pa_i ^x = \pa_i ^y$ for $i = 1, \cdots , n-1$. Thus $\pa_n ^x$ and $\pa_n ^y$ are coplanar with $\bn_x$ and $\bn_y$. 
We also require that the orientations formed by $\{\pa_n ^x , \bn_x \}$ and $\{\pa_n ^y , \bn_y \}$ are the same.
\\
\indent For abbreviation, let 
    $$\mc{L}(\ZZ) = \sum\limits _{i,j = 1} ^n g_y ^{ij} \frac{\pa^2 \ZZ}{\pa y^i \pa y^j} + g_x ^{ij} \frac{\pa^2 \ZZ}{\pa x^i \pa x^j} + 2 g_x ^{ik} g_y ^{jl} \langle \pa_k ^x , \pa_l ^y \rangle   \frac{\pa^2 \ZZ}{\pa x^i \pa y^j} \ .$$
Now, by the Codazzi equation $\nabla_j h_{ik} ^x = \nabla_i h_{jk} ^x$, we have:
    \begin{align*}
        \pa_t\ZZ - \mc{L}(\ZZ) =& \   d \HH_x \langle \om, - \HH _y \bn_y + \HH_x \bn_x \rangle + \frac{d^2}{2} \big[\Delta_M \HH_x + \HH_x |h^x|^2  + \pa_t(T_x) \big] 
        \\        &+ \delta \langle - \HH_y \bn_y + \HH_x \bn_x , \bn_x \rangle  +\delta d \langle \om , \nabla^M \HH _x \rangle - n \HH _x + d \HH _x H_y \langle \om , \bn_y \rangle + \delta H_y \langle \bn_y , \bn_x \rangle
        \\        &- n\HH_x - d \HH_x H_x \langle \om , \bn_x \rangle + 2d\langle \om , \nabla^M \HH_x \rangle - \frac{d^2}{2} \Delta_M \HH_x -\delta d \langle \om , \nabla^M H_x \rangle + \delta H_x 
        \\        &+ \delta d  \langle \om , \bn_x \rangle |h^x|^2 + 2(n-1)\HH_x + 2 \HH_x \langle \pa_n ^y , \pa_n ^x \rangle^2 - 2 d \langle \pa_i ^x , \pa_j ^y \rangle \langle \om , \pa_j ^y \rangle \nabla_i \HH_x 
        \\        &- 2\delta ( H_x - h_{nn} ^x + \langle\pa_n ^x ,\pa_n ^y \rangle^2 h_{nn}^x) \ .
    \end{align*}
    We can further simplify the right hand side to
    \begin{align*} 
    & =\ d \HH_x \langle \om , - T_y \bn_y\rangle + d \HH_x \langle \om , T_x \bn_x \rangle + \frac{d^2}{2}\pa_t(T_x) + |h^x|^2 \ZZ +\delta \langle -T_y \bn_y , \bn_x \rangle + \delta T_x
        \\        &+ \delta d \langle \om , \nabla^M T_x \rangle - 2 \HH_x  + 2d \big\langle \om , \pa_i ^x - \langle \pa_i ^x ,\pa_j ^y \rangle \pa_j ^y \big\rangle \nabla_i \HH_x + 2 \HH_x \langle \pa_n ^y ,\pa_n ^x\rangle ^2
        \\        &+ 2\delta h_{nn} ^x (1 - \langle \pa_n ^x , \pa_n ^y \rangle ^2) \ ,
    \end{align*}
    and finally we have
    \begin{align*}
        &= \ \ZZ |h^x|^2 + 2d \big\langle \om , \pa_i ^x - \langle \pa_i ^x ,\pa_j ^y \rangle \pa_j ^y \big\rangle \nabla_i \HH_x - 2 (\HH_x - \delta h_{nn} ^x) (1 - \langle \pa_n ^y ,\pa_n ^x\rangle ^2)
        \\        &+ d \HH_x \langle \om , T_x \bn_x - T_y \bn_y \rangle + \delta \langle T_x \bn_x - T_y \bn_y , \bn_x \rangle + \frac{d^2}{2} \pa_t(T_x) + \delta d \langle \om , \nabla^M T_x \rangle  \ .
    \end{align*}
We denote the first line of the right hand side of the last equation by $A_1$, while the second line of the right hand side of the last equation by $A_2$, i.e.
    \begin{equation}
        A_1 = \ZZ |h^x|^2 + 2d \big\langle \om , \pa_i ^x - \langle \pa_i ^x ,\pa_j ^y \rangle \pa_j ^y \big\rangle \nabla_i \HH_x - 2 (\HH_x - \delta h_{nn} ^x) (1 - \langle \pa_n ^y ,\pa_n ^x\rangle ^2) \ ,
    \end{equation}
    \begin{equation}\label{E:Expression of A_2}
        A_2 = d \HH_x \langle \om , T_x \bn_x - T_y \bn_y \rangle + \delta \langle T_x \bn_x - T_y \bn_y , \bn_x \rangle + \frac{d^2}{2} \pa_t(T_x) + \delta d \langle \om , \nabla^M T_x \rangle \ .
    \end{equation}
Note that $A_1$ is similar to the one Andrews got in \cite{A12}, and we would use the same way to simplify $A_1$ at any critical point of $\ZZ$ (i.e. $\frac{\pa \ZZ}{\pa x^i} = 0$ and $\frac{\pa \ZZ}{\pa y^i} = 0 $ for every $i$).

\begin{lemma}\label{L:Good part}
    \begin{equation}
        A_1 = \bigg(|h^x| ^2 + \frac{4\HH (\HH - \delta h_{nn} ^x)}{\delta^2} \langle\om , \pa_n ^y \rangle ^2\bigg)\ZZ
    \end{equation}
    at any critical point of $\ZZ$.
\end{lemma}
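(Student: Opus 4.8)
The plan is to follow Andrews' simplification in \cite{A12} essentially verbatim; the content here is only that $A_1$ is the ``$T$-free'' part of $\pa_t\ZZ-\mc{L}(\ZZ)$ and has exactly the same shape as the quantity Andrews handles, with the rescaled mean curvature $\HH$ in place of the mean curvature. All computations take place at a critical point of $\ZZ$, i.e.\ where $\frac{\pa\ZZ}{\pa x^i}=\frac{\pa\ZZ}{\pa y^i}=0$ for every $i$, and we keep the adapted frames introduced above: $\pa_i^x=\pa_i^y$ (as vectors in $\R^{n+1}$) for $i\le n-1$, while $\pa_n^x,\pa_n^y,\bn_x,\bn_y$ lie in a common $2$-plane $V$ with matching orientations. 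We may assume $x\ne y$, since otherwise $d=0$ and both sides vanish.

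First I would read off the consequences of the critical point conditions from Lemma \ref{L:First derivatives of Z tilde}. Since $M$ is rescaled mean convex or concave, $\HH_x\ne0$, so for $i\le n-1$ the equation $\frac{\pa\ZZ}{\pa y^i}=d\HH_x\langle\om,\pa_i^y\rangle+\delta\langle\pa_i^y,\bn_x\rangle=0$ together with $\langle\pa_i^y,\bn_x\rangle=\langle\pa_i^x,\bn_x\rangle=0$ forces $\langle\om,\pa_i^y\rangle=0$; hence $\om\in V$, and we write $\om=a\,\pa_n^x+b\,\bn_x$ with $a=\langle\om,\pa_n^x\rangle$, $b=\langle\om,\bn_x\rangle$, $a^2+b^2=1$. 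Feeding $\langle\om,\pa_p^x\rangle=0$ for $p\le n-1$ into $\frac{\pa\ZZ}{\pa x^i}=0$ makes the curvature term collapse and yields $\nabla_n\HH_x=\frac{2}{d}(\HH_x-\delta h_{nn}^x)\,a$, while the components $\nabla_i\HH_x$ with $i\le n-1$ will not enter $A_1$. Next I would bring in the rotation angle $\theta$ between the oriented orthonormal frames $\{\pa_n^x,\bn_x\}$ and $\{\pa_n^y,\bn_y\}$ of $V$: one finds $\langle\pa_n^x,\pa_n^y\rangle=\langle\bn_x,\bn_y\rangle=\cos\theta$ and $\langle\bn_x,\pa_n^y\rangle=\sin\theta$, so $1-\langle\pa_n^y,\pa_n^x\rangle^2=\sin^2\theta$; the $n$-th $y$-equation $d\HH_x\langle\om,\pa_n^y\rangle+\delta\sin\theta=0$ then gives the key relation $\sin\theta=-\frac{d\HH_x}{\delta}\langle\om,\pa_n^y\rangle$; and a one-line computation shows $\pa_i^x-\langle\pa_i^x,\pa_j^y\rangle\pa_j^y=0$ for $i\le n-1$ and $\pa_n^x-\langle\pa_n^x,\pa_j^y\rangle\pa_j^y=-\sin\theta\,\bn_y$.

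Now I would substitute these into
\[A_1=\ZZ|h^x|^2+2d\,\big\langle\om,\pa_i^x-\langle\pa_i^x,\pa_j^y\rangle\pa_j^y\big\rangle\nabla_i\HH_x-2(\HH_x-\delta h_{nn}^x)\big(1-\langle\pa_n^y,\pa_n^x\rangle^2\big).\]
Only the $i=n$ term of the middle sum survives, and it becomes $-4\sin\theta\,(\HH_x-\delta h_{nn}^x)\langle\om,\bn_y\rangle\,a$. Expanding $\langle\om,\pa_n^y\rangle=a\cos\theta+b\sin\theta$ and $\langle\om,\bn_y\rangle=-a\sin\theta+b\cos\theta$, eliminating $\sin\theta$ via the relation above, and using $b=\langle\om,\bn_x\rangle=\frac{\ZZ}{\delta d}-\frac{\HH_x d}{2\delta}$ (immediate from the definition of $\ZZ$) together with $a^2+b^2=1$, the last two terms of $A_1$ collapse after a short manipulation into $\frac{4\HH(\HH-\delta h_{nn}^x)}{\delta^2}\langle\om,\pa_n^y\rangle^2\,\ZZ$ (with $\HH=\HH_x$), which is the claim.

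The only genuine obstacle is the bookkeeping in this final algebraic step: one must keep careful track of the signs coming from the rotation $\theta$ and from the relations among $\om$, $\bn_x$, $\bn_y$, $\pa_n^x$ and $\pa_n^y$. Conceptually nothing new is required beyond Andrews' argument, so in the write-up I would carry it out carefully in the rescaled setting and cite \cite{A12} for the structurally identical computation.
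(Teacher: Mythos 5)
Your proposal is correct and follows essentially the same route as the paper: Andrews-style simplification of $A_1$ at a critical point of $\ZZ$, using the vanishing first derivatives from Lemma \ref{L:First derivatives of Z tilde}, the coplanarity of $\pa_n^x,\pa_n^y,\bn_x,\bn_y$, and the identity $\langle\om,\bn_x\rangle=\frac{\ZZ}{\delta d}-\frac{d\HH_x}{2\delta}$. The only difference is bookkeeping: you eliminate $\sin\theta$ via the rotation angle and the decomposition $\om=a\,\pa_n^x+b\,\bn_x$, whereas the paper packages the same critical-point information through Lemma \ref{L:General relation among normal vectors} and the factor $\rho=\sqrt{1+\tfrac{2\HH_x}{\delta^2}\ZZ}$; the final algebra (which indeed closes using $a^2+b^2=1$) agrees.
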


\begin{proof}
    At critical points of $\ZZ$, by Lemma \ref{L:First derivatives of Z tilde}, we have two equations:
    \begin{equation}
        d \HH_x \langle \om , \pa_i ^y \rangle = - \delta \langle \pa_i ^y , \bn_x \rangle \ ,
    \end{equation}
    \begin{equation}\label{E:Gradient of H tilde}
        \nabla_i \HH_x = \frac{2}{d} \langle \om , \HH_x \pa_i ^x - \delta h_{ip} ^x \pa_p ^x \rangle \ .
    \end{equation}
    From Lemma \ref{L:General relation among normal vectors}, at critical points, we also get:
    \begin{equation}\label{E:Relation among normal vectors}
        \bn_x + \frac{d \HH_x}{\delta}\om = \pm \bn_y \rho \ ,
    \end{equation}
    where $\rho = \sqrt{1 + \frac{2\HH_x}{\delta^2} \ZZ }$. This tells us that $\om$ is also in the plane formed by $\{\pa_n ^x , \bn_x\}$.
    Also, as $\pa_n ^x, \pa_n ^y , \bn_x ,\bn_y$ are in the same plane, we have $\pa_n ^x - \langle \pa_n ^x , \pa_n ^y \rangle \pa_n ^y = \langle \pa_n ^x , \bn_y \rangle \bn_y$.
    Then, $1 - \langle \pa_n ^x ,\pa_n ^y \rangle ^2 = \langle \pa_n ^x , \bn_y \rangle ^2$. Hence,
    \begin{align*}
        A_1 =& \  \ZZ |h^x|^2 + 4 \big\langle\om, \pa_n ^x - \langle\pa_n ^x , \pa_n ^y \rangle\pa_n^y \big\rangle \big\langle \om , \HH_x \pa_n^x - \delta h_{nn}^x \pa_n ^x \big\rangle
            - 2(\HH_x - \delta h_{nn}^x)\langle \pa_n ^x ,\bn_y \rangle ^2
        \\  =& \  \ZZ |h^x|^2 + 2(\HH_x - \delta h_{nn}^x) \bigg[2 \langle \om,\bn_y \rangle \langle \pa_n ^x , \bn_y \rangle \langle \om , \pa_n ^x \rangle - \langle \pa_n ^x , \bn_y \rangle ^2 \bigg] \ .
    \end{align*}
    Now, by \ref{E:Relation among normal vectors}, and the fact that $\om$ has unit norm, we get:
    \begin{equation}\label{E:inner product omega nu_Y}
        \begin{split}
                \langle \om , \bn_y \rangle =& \  \pm \frac{1}{\rho} \bigg(  \langle \om , \bn_x \rangle + \frac{d \HH_x}{\delta} \bigg) = \pm  \frac{1}{\rho}\bigg( \frac{1}{ \delta d} \big( \ZZ - \frac{d^2}{2} \HH_x \big) + \frac{d \HH_x}{\delta} \bigg) 
            \\    =& \  \pm \frac{1}{\rho} \frac{d \HH_x}{2 \delta} \bigg(1 + \frac{2 \ZZ}{d^2 \HH_x}\bigg) \ ,
        \end{split}
    \end{equation}
    and
    \begin{equation}
        \langle \om , \pa_n ^x \rangle = \pm \rho \frac{\delta}{d \HH_x}\langle \bn_y , \pa_n ^x \rangle \ .
    \end{equation}
    Hence,
    \begin{align*}
        A_1 &= \ZZ | h^x | ^2+ 2 (\HH_x - \delta h_{nn} ^x) \bigg[\big(1 + \frac{2 \ZZ}{d^2 \HH_x} \big) \langle \bn_y, \pa_n ^x \rangle ^2 - \langle \bn_y, \pa_n ^x \rangle ^2\bigg]
        \\  &= \bigg(|h^x| ^2 + \frac{4 (\HH_x - \delta h_{nn} ^x)}{d^2 \HH_x} \langle \bn_y , \pa_n ^x \rangle ^2 \bigg) \ZZ = \bigg(| h^x | ^2 + \frac{4 \HH_x (\HH_x - \delta h_{nn} ^x)}{\delta ^2} \langle \om , \pa_n ^y \rangle ^2\bigg) \ZZ \ .
    \end{align*}
\end{proof}

Till now, what we have obtained is very similar to the one we can get from Andrews's work \cite{A12}, but we have a bad term $A_2$.
We would simplify $A_2$ in a general form $T_x = \langle \nabla f_x ,\bn_x \rangle$ first and then specify $f = - \frac{|x|^2}{4}$ to prove Theorem \ref{T:Shrinker delta non collapsed}.

\begin{lemma}\label{L:Bad part}
    \begin{equation}\label{E:Expression of bad part}
        A_2 = -\frac{d^2}{2} \HH_x \nabla^2 f_x (\bn_x , \bn_x) \pm \bigg( \delta \rho \langle \bn_y , \nabla f_x - \nabla f_y \rangle + \frac{\delta ^2 \rho}{\HH_x}\langle \bn_y ,\pa_n^x \rangle \nabla^2 f_x(\bn_x, \pa_n ^x) \  \bigg), 
    \end{equation}
    at any critical point of $\ZZ$. Here, $\nabla f_x$ is the gradient of $f$ in $\mb{R}^{n+1}$ at point $x$, $\nabla^2 f_x$ is the Hessian of $f$ in $\mb{R}^ {n+1}$ at point $x$, $\rho = \sqrt{1 + \frac{2\HH_x}{\delta^2} \ZZ }$ as before.
\end{lemma}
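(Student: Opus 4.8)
The plan is to expand $A_2$ (the expression in (\ref{E:Expression of A_2})) term by term, feed in the critical-point identities coming from Lemma \ref{L:First derivatives of Z tilde} and Lemma \ref{L:General relation among normal vectors}, substitute the evolution equation from Lemma \ref{L:Evolution of T} for $\pa_t(T_x)$, and then exploit exactly the same coplanarity that was used in the proof of Lemma \ref{L:Good part}: at a critical point of $\ZZ$ the vectors $\pa_n^x,\pa_n^y,\bn_x,\bn_y,\om$ all lie in one fixed $2$-plane, so $\langle\om,\pa_i^x\rangle=0$ and $\langle\bn_y,\pa_i^x\rangle=0$ for $i<n$.

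First I would treat the time-derivative term: by Lemma \ref{L:Evolution of T}, $\frac{d^2}{2}\pa_t(T_x)=-\frac{d^2}{2}\HH_x\nabla^2 f_x(\bn_x,\bn_x)+\frac{d^2}{2}\langle\nabla f_x,\nabla^M\HH_x\rangle$. The first summand is already the first term of the claimed formula, so I set it aside and carry along the remainder $\frac{d^2}{2}\langle\nabla f_x,\nabla^M\HH_x\rangle$. Next I group the two ``boundary'' terms of $A_2$ as $\langle T_x\bn_x-T_y\bn_y,\ d\HH_x\om+\delta\bn_x\rangle$ and invoke (\ref{E:Relation among normal vectors}), which at a critical point says $d\HH_x\om+\delta\bn_x=\pm\delta\rho\bn_y$; this turns that pair into $\pm\delta\rho\big(T_x\langle\bn_x,\bn_y\rangle-T_y\big)$.

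For the remaining terms I compute $\nabla^M_i T_x=\nabla^2 f_x(\pa_i^x,\bn_x)+h_{ip}^x g_x^{pq}\langle\nabla f_x,\pa_q^x\rangle$ from the Weingarten relation, and I rewrite $\nabla^M\HH_x$ using (\ref{E:Gradient of H tilde}). Because $\langle\om,\pa_i^x\rangle=0$ for $i<n$, both $\delta d\langle\om,\nabla^M T_x\rangle$ and $\frac{d^2}{2}\langle\nabla f_x,\nabla^M\HH_x\rangle$ collapse to their $i=n$ contributions, and upon expansion each produces the term $\pm\delta d\langle\om,\pa_n^x\rangle\langle\nabla f_x,\nabla_{\pa_n^x}\bn_x\rangle$ with opposite signs, so these two terms cancel. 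What survives is $\delta d\langle\om,\pa_n^x\rangle\nabla^2 f_x(\bn_x,\pa_n^x)+d\HH_x\langle\om,\pa_n^x\rangle\langle\nabla f_x,\pa_n^x\rangle$. Finally I substitute $\langle\om,\pa_n^x\rangle=\pm\rho\,\tfrac{\delta}{d\HH_x}\langle\bn_y,\pa_n^x\rangle$ (immediate from (\ref{E:Relation among normal vectors}) since $\bn_x\perp\pa_n^x$), which converts these into $\pm\tfrac{\delta^2\rho}{\HH_x}\langle\bn_y,\pa_n^x\rangle\nabla^2 f_x(\bn_x,\pa_n^x)$ and $\pm\delta\rho\langle\nabla f_x,\pa_n^x\rangle\langle\bn_y,\pa_n^x\rangle$. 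Adding the second of these to the boundary contribution $\pm\delta\rho(T_x\langle\bn_x,\bn_y\rangle-T_y)$ and using that $\bn_y$ lies in the plane of $\{\pa_n^x,\bn_x\}$ — so that $\langle\nabla f_x,\bn_x\rangle\langle\bn_x,\bn_y\rangle+\langle\nabla f_x,\pa_n^x\rangle\langle\bn_y,\pa_n^x\rangle$ is just the normal-plus-tangential decomposition of $\langle\bn_y,\nabla f_x\rangle$ at $x$ — this packages into $\pm\delta\rho\langle\bn_y,\nabla f_x-\nabla f_y\rangle$, giving exactly (\ref{E:Expression of bad part}).

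The computation is long but essentially forced. The main obstacles are bookkeeping ones: making sure every tangential vector encountered ($\om$, $\pa_n^y$, $\bn_y$) is recognized as living in the distinguished $2$-plane so that all the relevant sums genuinely reduce to the single index $i=n$; and checking the cancellation of the two $\langle\nabla f_x,\nabla_{\pa_n^x}\bn_x\rangle$ terms coming from $\delta d\langle\om,\nabla^M T_x\rangle$ and from the $h$-part of $\nabla^M\HH_x$. As with Lemma \ref{L:General relation among normal vectors}, one must be uniform about the sign ambiguity, but — exactly as in the remark following that lemma — the ``$-$'' branches are paired and the ambiguity is harmless.
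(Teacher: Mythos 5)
Your proposal is correct and follows essentially the same route as the paper's proof: substitute Lemma \ref{L:Evolution of T} for $\pa_t(T_x)$, use $(\ref{E:Relation among normal vectors})$ to convert $\delta\bn_x+d\HH_x\om$ into $\pm\delta\rho\bn_y$, reduce the tangential sums to the index $n$ via coplanarity, and cancel the two $h^x(\nabla^M f,\pa_n^x)$ cross-terms coming from $(\ref{E:Gradient of H tilde})$ and from $\nabla^M T_x$. The only difference is organizational (the paper groups the first three terms into $(\ref{E:First part of bad term})$ and the last into $(\ref{E:Last part of bad term})$ before cancelling, while you cancel up front), which is immaterial.
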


\begin{proof}
    By (\ref{E:Relation among normal vectors}), the first two terms of $A_2$ in (\ref{E:Expression of A_2}) are $\pm \langle T_x \bn_x - T_y \bn_y , \rho \delta \bn_y \rangle$. 
    The third term of $A_2$ in \ref{E:Expression of A_2}, by Lemma \ref{L:Evolution of T}, is:
    \begin{align*}
        \frac{d^2}{2} \big( - \HH_x \nabla^2 f_x (\bn_x , \bn_x) + \langle \nabla f_x , \nabla ^M \HH_x \rangle \big) \ .
    \end{align*}
    By (\ref{E:Gradient of H tilde}),(\ref{E:Relation among normal vectors}), and the fact that $\om$ is in the plane formed by $\{\pa_n ^x , \bn_x\}$,
    \begin{align*}
        \langle \nabla f_x , \nabla^M \HH_x \rangle  =& \  \pa_i ^x f \pa_i ^x \HH_x =  \frac{2 \pa_i ^x f}{d} \langle \om , \HH_x \pa_i ^x - \delta h_{ip}^x \pa_p ^x \rangle
        \\  =& \ \frac{2}{d} \big(\pa_n ^x f \HH_x - \delta h^x(\nabla^M f_x , \pa_n ^x)\big)\langle \om , \pa_n ^x\rangle
        \\  =& \ \pm \frac{2 \rho \delta}{d^2 \HH_x}\big(\pa_n ^x f \HH_x - \delta h^x(\nabla^M f_x , \pa_n ^x)\big)\langle \bn_y , \pa_n ^x\rangle \ .
    \end{align*}
    We also notice that $\bn_y = \langle \bn_y , \pa_n ^x \rangle \pa_n ^x + \langle \bn_y , \bn_x \rangle \bn_x$. Hence, the sum of first three terms of $A_2$ is 
    \begin{equation}\label{E:First part of bad term}
        -\frac{d^2}{2} \HH_x \nabla^2 f_x (\bn_x , \bn_x) \pm \delta \rho \bigg(\langle \bn_y , \nabla f _x - \nabla f _y \rangle - \frac{\delta h^x(\nabla^M f_x , \pa_n ^x)}{\HH_x} \langle \bn_y ,\pa_n ^x \rangle \bigg) \ .
    \end{equation}
    For the last term in (\ref{E:Expression of A_2}), by (\ref{E:Relation among normal vectors}), and the the fact that $\nabla_{\pa_n ^x} \bn_x$ is tangent to $M$,
    \begin{equation}\label{E:Last part of bad term}
        \begin{split}
        \delta d \langle \om , \nabla^M T_x \rangle =& \ \pm \frac{\delta^2 \rho}{\HH_x} \langle \bn_y , \nabla^M T_x \rangle  = \ \pm \frac{\delta^2 \rho}{\HH_x} \langle \bn_y , \pa_n ^x(T_x) \pa_n ^x \rangle 
        \\  =& \ \pm \frac{\delta^2 \rho}{\HH_x} \langle \bn_y , \pa_n ^x \rangle \bigg(\langle \nabla_{\pa_n ^x} \nabla f ,\bn_x \rangle + \langle \nabla f , \nabla_{\pa_n ^x} \bn_x \rangle\bigg)
        \\  =& \ \pm \frac{\delta^2 \rho}{\HH_x} \langle \bn_y , \pa_n ^x \rangle \bigg(\langle \nabla_{\pa_n ^x} \nabla f ,\bn_x \rangle + \langle \nabla^M f , \nabla_{\pa_n ^x} \bn_x \rangle\bigg)
        \\  =& \ \pm \frac{\delta^2 \rho}{\HH_x} \langle \bn_y , \pa_n ^x \rangle \bigg( \nabla^2 f_x (\pa_n ^x , \bn _x) + h^x (\nabla^M f , \pa_n ^x)\bigg) .
        \end{split}
    \end{equation}
    Sum equations (\ref{E:First part of bad term}) and (\ref{E:Last part of bad term}), we get the expression (\ref{E:Expression of bad part}).
\end{proof}

\begin{remark}
    Note that, by (\ref{E:Relation among normal vectors}), 
        $$ \pm \frac{\delta ^2 \rho}{\HH_x} \langle \bn_y , \pa_n ^x \rangle = \langle d \delta \om , \pa_n ^x \rangle \ , $$
    and with (\ref{E:inner product omega nu_Y}),
        $$\langle  d \delta \om , \bn_x \rangle = \big\langle d \delta \om , (\pm \rho \bn_y) - \frac{d \HH_x}{\delta} \om \big\rangle = -\frac{d^2 \HH_x}{2} + \ZZ \ .$$
    Hence, by(\ref{E:Expression of bad part}), we can alo get
        $$A_2 = -\ZZ \nabla^2 f_x (\bn_x , \bn_x) + \nabla^2 f_x (\bn_x , d \delta \om) + \langle \delta \bn_x + d \HH_x \om , \nabla f_x - \nabla f_y \rangle \ .$$
    This is an expression without term $\rho$,  but expression (\ref{E:Expression of bad part}) is easier to handle when we prove the Theorem \ref{T:Shrinker delta non collapsed}.
\end{remark}

\begin{proof}[Proof of Theorem \ref{T:Shrinker delta non collapsed}]
    If the initial data has positive rescaled mean curvature, then, by Lemma \ref{L:Preservation of rescaled mean curvature}, $\HH_x > 0$ at any point of space and time. For $f = -\frac{| x |^2}{4}$, we have $\nabla f = - \frac{x}{2}$ and $\nabla^2  f = - \frac{1}{2}\mr{I}$. Hence,
    \begin{equation}
        \nabla^2 f_x (\bn_x , \pa_n ^x) = 0 \ ,\nabla^2 f_x (\bn_x , \bn_x) = -\frac{1}{2}  \ ,
    \end{equation}
    and by equation (\ref{E:inner product omega nu_Y}),
    \begin{align*}
        \delta \rho \langle \bn_y , \nabla f_x - \nabla f_y \rangle = \frac{\delta \rho}{2} \langle \bn_y , F(y,t) - F(x,t) \rangle = \frac{d \delta \rho}{2} \langle \bn_y , \om \rangle = \pm \frac{d^2 \HH_x}{4}\bigg(1 + \frac{2 \ZZ}{d^2 \HH_x}\bigg) \ .
    \end{align*}
    By Lemma \ref{L:Bad part}, 
    \begin{equation}\label{Compare A_2}
        A_2 = \frac{d^2 \HH_x}{2} + \frac{\ZZ}{2} \geq \frac{\ZZ}{2} \ .
    \end{equation}
    Hence, by our previous calculation with Lemma \ref{L:Good part} and Lemma \ref{L:Bad part}, 
    \begin{equation}\label{Compare Z}
         \pa_t \ZZ - \mc{L}(\ZZ) \geq \bigg( \frac{1}{2} + |h^x| ^2 + \frac{4\HH (\HH - \delta h_{nn} ^x)}{\delta^2} \langle\om , \pa_n ^y \rangle ^2\bigg)\ZZ
    \end{equation}
    at any critical point of $\ZZ$.
    The maximum principle on $M \times M \setminus \{x=y\}$ implies that $\ZZ$ stays nonnegative if it is initially nonnegative ($\ZZ$ is zero on the diagonal $\{x=y\}$).
    
    If the initial data has negative rescaled mean curvature, we could also get the corresponding rescaled-$-\delta$-non-collapsed result by Lemma \ref{L:Equivalence of nonnegative Z tilde} with the same calculation above for some $\delta > 0$. More precisely, change the direction of inequalities in (\ref{Compare A_2}) and (\ref{Compare Z}). 
\end{proof}

\begin{remark}\label{R: Rescaling non collapsed}
    It is easy to check that the above calculation in the proof of Theorem \ref{T:Shrinker delta non collapsed} also holds for $f = - \frac{|x- \sigma x_0|^2}{4\sigma^2}, \ \forall \sigma \in (0,\infty), \ \forall x_0 \in \mb{R}^n$.
\end{remark}

\end{document}